\documentclass[11pt]{article}
\usepackage[margin=1in]{geometry}
\usepackage{amsmath,amssymb,amsthm}
\usepackage{mathtools}
\usepackage{microtype}
\usepackage{booktabs,array,needspace}
\usepackage[hidelinks]{hyperref}
\usepackage{authblk}
\newtheorem{theorem}{Theorem}
\newtheorem{lemma}{Lemma}

\newtheorem{corollary}{Corollary}

\newtheorem{claim}{Claim}
\newtheorem{definition}{Definition}
\newcommand{\OPT}{\operatorname{OPT}}
\newcommand{\ind}{\mathbf 1}
\hypersetup{pdftitle={Linear Programming Upper Bounds for the Coil-in-the-Box Problem},
 pdfauthor={Kevin M. Byrnes, Eric T. Harley, Florin Spinu}}
\title{Linear Programming Upper Bounds\\for the Coil--in--the--Box Problem}
\author[1]{Kevin M. Byrnes}
\author[2]{Eric T. Harley}
\author[3]{Florin Spinu}

\affil[1]{Baltimore, MD, USA\\
\texttt{kevin.byrnes.jr@gmail.com}}

\affil[2]{South Kingstown, RI, USA\\
\texttt{eric.harley@gmail.com}}

\affil[3]{New York, NY, USA}

\date{}
\begin{document}
\maketitle

\begin{abstract}
Let $S_n$ denote the maximum length of a coil in the $n$-dimensional hypercube $Q_n$, i.e., a simple cycle whose induced subgraph has no additional chords. 
We combine classical square-counting identities and structural properties of maximum coils 
with new valid combinatorial inequalities into a parameterized linear program whose optimal value provides an upper bound for $S_n$.
For $n\ge13$, we construct two analytic dual feasible solutions. 
Their objective values yield upper bounds on $S_n$, and the smaller value is the exact optimum of the parameterized LP.
One dual solution results in the bound
\[
S_n\le
2^{n-1}
\frac{8n(n-1)(5n-14)}
{40n^3-131n^2+4n+136-(5n-14)\lfloor3n/4\rfloor},
\]
which improves upon the upper bounds in \cite{Lukito,LvZ,Zemor} for $13\le n\le19853$.
The second yields the upper bound
\[
S_n\le
2^{n-1}\left(
1-\frac1{30\sqrt{3n}}+O(n^{-1})
\right),
\]
improving upon the asymptotic upper bound of \cite{Zemor}.
Taken together, the two bounds improve upon the upper bounds for $S_n$ in \cite{Lukito,LvZ,Zemor} for every $n\ge13$.
\end{abstract}

\medskip
\noindent\emph{Keywords:}
coil-in-the-box problem, hypercube, induced cycles, linear programming,
extremal graph theory.

\smallskip
\noindent\emph{2020 Mathematics Subject Classification:}
Primary 05C35; Secondary 05C38, 90C05, 94B65.

\section{Introduction}\label{sec:intro}
The vertices of the $n$-dimensional hypercube $Q_n$ are the binary vectors in $\{0,1\}^n$, with two vertices adjacent when they differ in exactly one coordinate.
A \emph{coil} is an induced cycle in $Q_n$; equivalently, no two nonconsecutive vertices of the cycle are adjacent in $Q_n$.
Let $S_n$ denote the maximum length, i.e., the number of vertices, of a coil in $Q_n$.
The exact values of $S_n$ have been determined for $n\le 8$ \cite{OP, MutzeSurvey, OrlandEtAl}.

There is a long sequence of upper bounds on $S_n$.  The bound
\[
S_n \le 2^{n-1}\left(1-\frac{2}{n^2-n+2}\right),\qquad n\ge 7,
\]
was proved in \cite{Soloveva}.  The bound in \cite{Snevily} is
\[
S_n \le 2^{n-1}\left(1-\frac{1}{20n-41}\right),\qquad n\ge 12
\]
which was subsequently refined in \cite{Lukito} to
\begin{equation}
\label{eq:Lukito}
S_n \le 2^{n-1}\left( 1-\frac{1}{4n-9} \right) \text{ for } n\ge 11.
\end{equation}
The asymptotic upper bound established in \cite{Zemor} is
\begin{equation}
\label{eq:Zemor}
S_n\le 1+2^{n-1}\frac{6n}{6n+\sqrt n/(6\sqrt6)-7}
\le 2^{n-1}\left(1-\frac{1}{89\sqrt n}+O(n^{-1})\right)
\end{equation}
but the finite-dimensional bounds obtained from ``square-counting'' methods are stronger over a large range of intermediate dimensions.

The finite-dimensional bound in \cite{LvZ}, improving on \cite{EL}, is
\begin{equation}
\label{eq:LvZ}
S_n\le 2^{n-1}\left(1-\frac{2n-20}{5n^2-3n-20}\right)
+\frac{16n-80}{5n^2-3n-20},\qquad n\ge 7,
\end{equation}
and it was shown there to improve upon \eqref{eq:Zemor} for $7\le n\le19081$.
The bound \eqref{eq:Lukito} is superior for $11\le n \le 29$.

Let $U_L(n)$ and $U_{LvZ}(n)$ denote the right-hand sides of \eqref{eq:Lukito} and \eqref{eq:LvZ}, respectively, and let $U_Z(n)$ denote the
finite bound $1+2^{n-1}6n/(6n+\sqrt n/(6\sqrt6)-7)$ in \eqref{eq:Zemor}.
To the best of our knowledge, the upper bounds \eqref{eq:Lukito} and \eqref{eq:LvZ} are the strongest known for $11\le n \le 19081$,
while \eqref{eq:Zemor} is the strongest asymptotically among these earlier bounds \cite{MutzeSurvey}.

In this paper we make two contributions.
First, we derive new valid combinatorial inequalities for the structure of maximum coils in terms of the aggregate degree counts of vertices in the complement,
building upon the work of \cite{EL, Zemor}.
Second, we consider the continuous relaxations of these inequalities and analyze their interactions in a parametric linear program whose optimal value upper bounds $S_n$.
This is in the spirit of linear-programming bounds in coding theory and spherical codes \cite{Delsarte,DGS} and Euclidean sphere--packing \cite{CohnElkies}.

The data in the LP are ill-scaled for large values of $n$, but by analyzing the optimal solutions for small values of $n$ we form an ansatz and devise two \emph{analytic} dual feasible solutions whose objective function values provide explicit upper bound formulas for $S_n$.
One dual solution yields
\[
S_n\le
2^{n-1}
\frac{8n(n-1)(5n-14)}
{40n^3-131n^2+4n+136-(5n-14)\lfloor3n/4\rfloor}.
\]
This bound improves $U_L(n)$ and $U_{LvZ}(n)$ for every $n\ge13$, and $U_Z(n)$ for $13\le n\le19853$.
The second dual solution provides an upper bound of
\[
S_n\le
2^{n-1}\left(
1-\frac1{30\sqrt{3n}}+O(n^{-1})
\right),
\]
whose leading correction improves that obtained from the finite formula in \eqref{eq:Zemor} by a factor of $6\sqrt2/5$.
Together, the two solutions yield
\[
S_n<\min\{U_L(n),U_{LvZ}(n),U_Z(n)\}
\qquad(n\ge13).
\]
\noindent Finally, we establish that one of the two dual solutions must be optimal (depending on $n$ and the value of the LP parameter) and present a matching primal solution.

Section~\ref{sec:prelim} collects the supporting combinatorial lemmas and notation.
Section~\ref{sec:LP} presents the linear program, derives the primal candidates,
and proves their optimality by matching dual solutions. Section~\ref{sec:comparison} compares
the resulting upper bound with the classical bounds and gives its
asymptotic analysis. Section~\ref{sec:conclusions} concludes with questions concerning
integrality and combinatorial realizability.

\section{Supporting lemmas}\label{sec:prelim}
Throughout the paper $n\ge13$, unless explicitly stated otherwise.
Set
\[
N=2^{n-1},
\qquad
M=\left\lfloor\frac{3n}{4}\right\rfloor.
\]
Let $V(Q_n)$ denote the vertex set of $Q_n$, let $S\subseteq V(Q_n)$ be a maximum-length coil, $S_n = |S|$, and let $Y=V(Q_n)\setminus S$.  For $x\in V(Q_n)$, let $N(x)$ denote its neighborhood in $Q_n$.
For $x\in Y$ define
\[
s(x)=|N(x)\cap S|,\qquad Y_m=\{x\in Y:s(x)=m\},\qquad y_m:=|Y_m|.
\]
Thus $y_m$ is the number of vertices outside the coil having exactly $m$ neighbors on $S$.
For use in later inequalities, put
\[
\eta(y)=\sum_{m=0}^{M}y_m.
\]
This is the number of complement vertices having at most $M$ neighbors on the coil.
Every vertex of $S$ has exactly two neighbors in $S$ and $n-2$ in $Y$, hence
\begin{align}
\sum_{m=0}^n y_m&=2^n-S_n, \label{eq:Ycount}\\
\sum_{m=0}^n m y_m&=(n-2)S_n.\label{eq:SYedgecount}
\end{align}
From \cite[Lemma~2]{Snevily} we have the further equality
\begin{equation}
y_n=0.\label{eq:Yn}
\end{equation}

\subsection{Square-counting lemmas}\label{subsec:squares}
We classify the squares (4-cycles) of $Q_n$ as in \cite{EL}: 
$\mathcal{F}_1$ is the family of squares containing exactly three vertices of $S$; 
$\mathcal{F}_2$ is the family of squares containing exactly two adjacent vertices of $S$; 
$\mathcal{F}_3$ is the family of squares containing exactly two opposite vertices of $S$; 
$\mathcal{F}_4$ is the family of squares containing exactly one vertex of $S$; 
and $\mathcal{F}_5$ is the family of squares containing no vertex of $S$.  Since $S$ is an induced cycle and $S_n> 4$ in the range of $n$ considered here \cite{AK}, no square of $Q_n$ can have all four of its vertices in $S$; thus $\mathcal{F}_1$--$\mathcal{F}_5$ exhaust all possible squares.

For $1\le i\le5$, let $F_i=|\mathcal{F}_i|$.  We will need the following square identities:
\begin{align}
F_1&=S_n,\label{eq:F1}\\
F_1+2F_3&=\sum_m\binom m2 y_m,\label{eq:F1F3count}\\
F_4+4F_5&=\sum_m\binom{n-m}{2} y_m,\label{eq:F4F5count}\\
2F_3+F_4&=\binom{n-2}{2}S_n.\label{eq:Ssquare}
\end{align}
Here \eqref{eq:F1} is equation~(8) of \cite{EL}; \eqref{eq:F1F3count} and \eqref{eq:F4F5count} are equations~(10) and~(12) in Lemma~1 of \cite{EL}, after translating the complement degree there to $n-s(x)$; and \eqref{eq:Ssquare} is used in the proof of \cite[Lemma~5]{EL} and originates in \cite{Soloveva}.

We first develop two inequalities for the linear programming upper bound on $S_n$ in Section~\ref{sec:LP}.
The first is an inequality relating $y_{n-1}, y_{n-2}, y_{n-3}$ that arises from counting $\mathcal{F}_1$ square incidences using the argument of \cite[Lemma~4]{EL}.
Its proof is somewhat detailed and is deferred to Appendix~\ref{app:Yn2refinement}.

\begin{lemma}
\label{lem:Yn2refinement}
For $n\ge13$,
\begin{equation*}
(n-1)y_{n-1}+4y_{n-2}-y_{n-3}\le2S_n.
\end{equation*}
\end{lemma}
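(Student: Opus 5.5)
The plan is to double count the squares of $\mathcal{F}_1$ by assigning each one to the unique vertex of $Y$ it contains, following the argument of \cite[Lemma~4]{EL}. First I would pin down the structure of an $\mathcal{F}_1$ square. If a square contains three vertices $a,b,c\in S$, with $b$ adjacent to $a$ and $c$, then $a$ and $c$ are the two coil neighbours of $b$, since $b$ has exactly two neighbours in $S$. The fourth vertex $x\in Y$ is adjacent to both $a$ and $c$. Consequently each $b\in S$ lies at the corner of exactly one $\mathcal{F}_1$ square, which is \eqref{eq:F1}. Each such square has a well-defined \emph{apex} $x\in Y$. Writing $t(x)$ for the number of $\mathcal{F}_1$ squares with apex $x$, we get
\[
\sum_{x\in Y} t(x)=S_n .
\]
It therefore suffices to produce a weighting $w(x)$ with $w(x)\le 2t(x)$ on average. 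The weights should be $n-1$ on $Y_{n-1}$, $4$ on $Y_{n-2}$, $-1$ on $Y_{n-3}$, and $0$ elsewhere. Here ``on average'' allows charges to be moved between nearby vertices of $Y$.

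The local count goes as follows. Fix $x$ with $s(x)=m$, and count edge-ends from the $S$-neighbours $a=x+e_i$ of $x$ to their coil neighbours $b=x+e_i+e_j$. There are $2m$ such ends. An end lands on a vertex $b$ whose other coil neighbour is $x+e_j\in S$ exactly when it belongs to an $\mathcal{F}_1$ square with apex $x$, and each such square absorbs two ends. Any other end uses a direction $e_j$ with $x+e_j\in Y$, and there are only $n-m$ such directions.

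For $m=n-1$ there is a single missing direction $e_k$. I would show, using inducedness, that the coil through $x+e_i$ can turn into the $e_k$ direction only boundedly often. This should give $2t(x)\ge 2(n-1)-O(1)$, comfortably above the target $n-1$. The real content is then an amortisation. Neighbouring vertices $x'=x+e_k\in Y$ may be forced into low $t$-values, or may share the vertices $x+e_i+e_k$. The surplus of roughly $n-1$ per vertex of $Y_{n-1}$ would be used to pay for those neighbours.

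For $m=n-2$ the same count gives $2t(x)\ge 2(n-2)-$(ends wasted in the two missing directions). The bound needed is only $4$. I would prove it by showing that the coil cannot leave the $S$-neighbourhood of $x$ along both missing directions everywhere.

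The $-y_{n-3}$ term is what makes the inequality only an inequality ``on average''. Since $t\ge0$, a vertex of $Y_{n-3}$ cannot literally satisfy $2t\ge -1$ as a constraint on its own unless charge is moved to it. So I expect a discharging step. Each vertex of $Y_{n-1}$ or $Y_{n-2}$ whose $e_k$-neighbour lies in $Y_{n-3}$, or is adjacent to one, sends it charge $1$. The bound $2t(x)\ge w(x)$ then holds after redistribution, with at most one unit transferred per $Y_{n-3}$ vertex. Two vertices of $Y_{n-1}\cup Y_{n-2}$ cannot share a target in a way that overloads it, because their neighbourhoods are nearly all of $S$ and a coil vertex has only two coil neighbours.

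The main obstacle is this case analysis. One has to bound how often the coil ``escapes'' through the missing directions of a high-degree vertex $x$, and to control the interactions between two high-degree vertices at distance $1$ or $2$. This is where the hypothesis $n\ge13$ should be used: it makes $s(x)\ge n-3$ force enough $\mathcal{F}_1$ squares that the constants $n-1$, $4$, $-1$ survive the worst configurations. I would first try small cases by hand, for instance a vertex $x\in Y_{n-1}$ adjacent to a vertex of $Y_{n-2}$, to fix the exact transfer rule before writing the general argument.
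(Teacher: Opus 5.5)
Your framework matches the paper's. Each $\mathcal F_1$-square has a unique vertex in $Y$, so $\sum_{x\in Y}t(x)=S_n$. For $x\in Y_{n-1}$ your end count already suffices: the wasted ends are exactly the $S$-neighbours of $x+e_k$, so $2t(x)=2(n-1)-s(x+e_k)\ge n-1$. You give no argument for the stronger $2(n-1)-O(1)$, which would require bounding $s(x+e_k)$ by a constant, and none is needed. So do not build on a surplus from $Y_{n-1}$.

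\textbf{The gap is the $Y_{n-2}/Y_{n-3}$ part, where two things go wrong.}

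First, showing that the coil cannot escape through the two missing directions everywhere yields only $t(x)\ge1$, i.e.\ $2t(x)\ge2$, not $4$. Vertices of $Y_{n-2}$ with $t(x)=1$ are not ruled out, and they are precisely where the inequality is in deficit.

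Second, your discharging runs backwards. A vertex of $Y_{n-3}$ has weight $-1<0\le 2t(x)$, so it always has slack at least $1$ and needs no charge. Moving charge into $Y_{n-3}$ from $Y_{n-1}\cup Y_{n-2}$ only worsens the deficit. The $-y_{n-3}$ term is there so that $Y_{n-3}$ vertices can \emph{pay for} poor $Y_{n-2}$ vertices.

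\textbf{The missing idea is the structure of $x\in Y_{n-2}$ with $t(x)=1$.} Normalize $x=\varnothing$ with $N(x)\cap Y=\{a,b\}$, $a=\{n-1\}$, $b=\{n\}$. One shows that $c=\{n-1,n\}\in Y$, since otherwise $S$ closes into an $8$-cycle. One also shows $s(c)\le2$, and then $\delta(a)+\delta(b)=6$. This leaves two cases.
\begin{itemize}
\item \emph{Balanced case}, $\delta(a)=\delta(b)=3$. Then $a,b\in Y_{n-3}$. A translation argument shows neither is a $Y$-neighbour of a second balanced vertex. Hence the number $B$ of balanced vertices satisfies $2B\le y_{n-3}$.
\item \emph{Unbalanced case}, say $\delta(a)=2$. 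Then $a\in Y_{n-2}$ is a companion whose only $Y_{n-2}$-neighbour is $x$, so it serves a single unbalanced vertex. It carries $t(a)\ge\lceil (n-4)/2\rceil\ge4$ squares.
\end{itemize}
You also need $t\ge1$ on all of $Y_{n-2}$, which uses the lower bound $S_n>4(n-2)$. Combining these gives $\sum_{x\in Y_{n-2}}t(x)\ge 2y_{n-2}-\tfrac12 y_{n-3}$. Your one-line reason for ``no overloading'' does not replace these two injectivity arguments, and they are where the real work of the lemma lies.
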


\begin{lemma}\label{lem:Alower}
For $n\ge13$,
\begin{equation*}
\sum_{m=0}^{n}\binom m2 y_m
\ge (n^2-3n+3)S_n-\frac{n(n-1)}4\,2^n.
\end{equation*}
\end{lemma}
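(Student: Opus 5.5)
The plan is to combine the square identities \eqref{eq:F1}--\eqref{eq:Ssquare} with two additional counts: the total number of squares of $Q_n$, and the number of squares through coil edges. The only inequality used will be $F_5\ge 0$.

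\emph{Step 1: count $F_2$.} The coil $S$ has $S_n$ edges, and each edge of $Q_n$ lies in exactly $n-1$ squares. Take a square containing a coil edge.
\begin{itemize}
\item It cannot contain four coil vertices.
\item If it contains exactly three, those three vertices form a path of length two in $Q_n$. Since $S$ is induced, they form two consecutive coil edges, so the square is in $\mathcal F_1$ and contains exactly two coil edges.
\item If it contains exactly two, these are the endpoints of the edge, so the square is in $\mathcal F_2$ and contains exactly one coil edge.
\end{itemize}
Conversely, every square in $\mathcal F_1\cup\mathcal F_2$ contains a coil edge, since two adjacent coil vertices are consecutive on the coil because $S$ is induced. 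Double counting pairs (coil edge, square containing it) gives $(n-1)S_n=2F_1+F_2$. Using \eqref{eq:F1}, this yields $F_2=(n-3)S_n$.

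\emph{Step 2: bound $F_3+F_4$.} The total number of squares in $Q_n$ is $\binom n2 2^{n-2}=\frac{n(n-1)}{8}2^n$, and this equals $F_1+\dots+F_5$. Since $F_5\ge0$,
\[
F_3+F_4\le \frac{n(n-1)}8 2^n - F_1-F_2=\frac{n(n-1)}8 2^n-(n-2)S_n .
\]

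\emph{Step 3: lower-bound $F_3$ and conclude.} Subtracting the bound of Step 2 from \eqref{eq:Ssquare} eliminates $F_4$:
\[
F_3\ge \binom{n-2}{2}S_n+(n-2)S_n-\frac{n(n-1)}8 2^n .
\]
Substituting into \eqref{eq:F1F3count} together with \eqref{eq:F1},
\[
\sum_m\binom m2 y_m=S_n+2F_3\ge \bigl(1+(n-2)(n-3)+2(n-2)\bigr)S_n-\frac{n(n-1)}4 2^n .
\]
Since $1+(n-2)(n-1)=n^2-3n+3$, this is exactly the claimed inequality.

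The only step requiring care is the classification in Step 1, namely that every square through a coil edge lies in $\mathcal F_1\cup\mathcal F_2$ with the stated number of coil edges. This rests entirely on $S$ being an induced cycle of length greater than $4$. The remaining steps are bookkeeping.
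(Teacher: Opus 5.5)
Your proof is correct, and it takes a different bookkeeping route from the paper's.

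\textbf{The paper's route.} The paper eliminates $F_3$ and $F_4$ using the complement-side identity \eqref{eq:F4F5count}. It expands $\binom{n-m}{2}=\binom n2-(n-1)m+\binom m2$, then uses the vertex counts \eqref{eq:Ycount}--\eqref{eq:SYedgecount} to turn $\sum_m y_m$ and $\sum_m m y_m$ into expressions in $S_n$ and $2^n$. These vertex counts are the identities that become (C1)--(C2) in the LP.

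\textbf{Your route.} You bypass \eqref{eq:F4F5count}, \eqref{eq:Ycount} and \eqref{eq:SYedgecount} entirely. In their place you use two global counts that you verify directly: the total number $\binom n2 2^{n-2}$ of squares in $Q_n$, and the edge--square incidence count along the coil, which gives $F_2=(n-3)S_n$. Your case analysis for squares through a coil edge is sound, and it uses only that $S$ is an induced cycle of length greater than $4$.

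\textbf{What each route buys.} Yours is more self-contained and makes the slack transparent: it is exactly the number of squares of $Q_n$ that miss the coil. The paper's route keeps the argument inside the cited square identities of \cite{EL} and the LP's own equality data, so the lemma is visibly a consequence of quantities already in the model.

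If you carry $F_5$ through Step 2 instead of discarding it, your argument yields exactly the identity \eqref{eq:F5identity}:
\[
\sum_m\binom m2 y_m=(n^2-3n+3)S_n-\frac{n(n-1)}4\,2^n+2F_5 .
\]
The paper reuses this identity in Corollary~\ref{cor:coil_realizability}, so your approach loses nothing.
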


\begin{proof}
From \eqref{eq:F1}, \eqref{eq:F1F3count}, and \eqref{eq:Ssquare},
\[
F_4=\left(\binom{n-2}{2}+1\right)S_n
-\sum_{m=0}^{n}\binom m2 y_m.
\]
Combining this with \eqref{eq:F4F5count} and using the identity
\[
\binom{n-m}{2}=\binom n2-(n-1)m+\binom m2
\]
together with \eqref{eq:Ycount} and \eqref{eq:SYedgecount}, one obtains
\begin{equation}
\label{eq:F5identity}
\sum_{m=0}^{n}\binom m2 y_m
=(n^2-3n+3)S_n-\frac{n(n-1)}4\,2^n+2F_5.
\end{equation}
The result follows from $F_5\ge0$.
\end{proof}

\subsection{Heavy-segment lemmas}\label{subsec:heavy}
For $x\in Y$, write
\[
\delta(x)=\deg_Y(x)=n-s(x).
\]
A complement vertex is called \emph{heavy} when $\delta(x)\ge n/4$.
Equivalently, it belongs to $Y_m$ for some $m\le M$, so
$\eta(y)$ counts the heavy complement vertices.

We use two consequences of the structural analysis in \cite{Zemor}.
For coils of length at least nine, \cite[Lemma~3.2]{Zemor} says that every segment of $S$ consisting of seven consecutive edges contains a vertex having a heavy neighbor in $Y$.
We call such a vertex of $S$ \emph{good}.
The lower bound in \cite{AK} gives $S_n>10$ in the present range, so this local result applies.

\begin{lemma}
\label{lem:Zprojection}
Let $S$ be a maximum-length coil in $Q_n$ having length $S_n$ and let $M=\left\lfloor\frac{3n}{4}\right\rfloor$.
Then
\begin{equation}\label{eq:Zcut}
\sum_{m=0}^{M}m y_m\ge \frac{S_n}{8}.
\end{equation}
\end{lemma}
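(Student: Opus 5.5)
The plan is to double count the edges between good vertices of $S$ and heavy vertices of $Y$. The left side of \eqref{eq:Zcut} is exactly the number of edges of $Q_n$ joining $S$ to a heavy vertex of $Y$. Indeed, each $x\in Y_m$ with $m\le M$ contributes its $m=s(x)$ neighbors on $S$, so
\[
\sum_{m=0}^{M} m\,y_m=\bigl|\{(v,x): v\in S,\ x\in Y,\ vx\in E(Q_n),\ s(x)\le M\}\bigr|.
\]
Every good vertex $v\in S$ is the first coordinate of at least one such pair. Hence the left side is at least the number $g$ of good vertices of $S$. It therefore suffices to show $g\ge S_n/8$.

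To bound $g$, use \cite[Lemma~3.2]{Zemor}, which applies because $S_n>10$ by \cite{AK}. It guarantees that every segment of seven consecutive edges, i.e.\ eight consecutive vertices of the cycle $S$, contains a good vertex. Label the vertices of $S$ cyclically as $v_0,\dots,v_{S_n-1}$. The $S_n$ cyclic windows $W_i=\{v_i,\dots,v_{i+7}\}$ each contain at least one good vertex. Each vertex lies in exactly eight windows, so
\[
8g=\sum_{i}|W_i\cap\{\text{good}\}|\ge S_n .
\]
This gives $g\ge S_n/8$, and together with the first step it yields \eqref{eq:Zcut}.

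The only point needing care is the interpretation of ``segment of seven consecutive edges'' as eight vertices. If Zémor's lemma were instead phrased with seven vertices, the same averaging would give the stronger $S_n/7$, so the stated bound holds either way. One should also check that the windows are well defined on the cycle, which holds because $S_n>8$. A further check is that the heaviness threshold matches: $\delta(x)\ge n/4$ is equivalent to $s(x)\le 3n/4$, i.e.\ $s(x)\le M$ since $s(x)$ is an integer, and this is exactly what the sum over $m\le M$ counts. No step is a serious obstacle once Lemma~3.2 of \cite{Zemor} is granted.
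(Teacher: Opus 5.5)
Your proposal is correct and follows essentially the same argument as the paper: the averaging over the $S_n$ cyclic 8-vertex windows gives at least $S_n/8$ good vertices, and your direct count of edges from $S$ to heavy vertices of $Y$ is a slightly streamlined version of the paper's step. The paper instead assigns one heavy neighbor to each good vertex and notes that each $x\in Y_m$ can be assigned at most $m$ times.
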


\begin{proof}
Call a set of $k$ cyclically consecutive vertices of $S$ a $k$-segment and let $\mathcal{G}$ denote the set of all good vertices of $S$.
There are $S_n$ distinct $8$-segments of $S$, each of which contains at least one good vertex by \cite[Lemma~3.2]{Zemor}.
Since each $v\in \mathcal{G}$ is shared by 8 such segments, $S_n/8 \le |\mathcal{G}|$.

For each $v\in \mathcal{G}$, choose one neighbor $x\in Y$ satisfying $\deg_Y(x)\ge \frac{n}{4}$.
Call $x$ the \emph{associated} vertex and let $\mathcal X_{\mathrm g}$ denote the set of all associated vertices.
Observe that $x\in Y_m$ for some $m$; thus $n-m = \deg_Y(x) \ge \frac{n}{4}$, so $m\le M$.
Also note that each $x\in \mathcal X_{\mathrm g}$ can be associated with at most $m$ vertices in $\mathcal{G}$ (when $x\in Y_m$). 
Thus we have
\[
S_n/8 \le |\mathcal{G}| \le \sum_{m=0}^M m\left|\{x\in \mathcal X_{\mathrm g} \cap Y_m\}\right| \le \sum_{m=0}^M m y_m
\]
which yields \eqref{eq:Zcut}.
\end{proof}

We also use the \emph{heavy segments} introduced in \cite{Zemor}.
Addition of binary vectors below is componentwise modulo two.

\begin{definition}[Heavy segment]
\label{def:heavy}
A heavy segment is a triple $(\sigma,x,\nu)$ with the following properties.
The segment $\sigma$ consists of ten consecutive edges of $S$, with center vertex $O$;
the associated vertex $x\in Y$ is adjacent to $O$ and satisfies $\delta(x)\ge n/4$;
and $\nu=x+O$.
Moreover, there is a noncentral vertex $V$ of $\sigma$ such that $V+\nu\in S$.
\end{definition}

By \cite[Lemma~4.1]{Zemor}, there exists a family $\mathcal H$ of pairwise vertex-disjoint heavy segments such that
\begin{equation}
\label{eq:packing}
H:=|\mathcal H|\ge\frac{S_n-10}{18}.
\end{equation}
Let $\mathcal X$ be the set of distinct associated vertices, and set
\[
K=|\mathcal X|,
\qquad
h_x=|\{\sigma\in\mathcal H:x\text{ is associated with }\sigma\}|.
\]
Every associated vertex is heavy, so $K\le\eta(y)$.
Since $S_n>10$, \eqref{eq:packing} implies $H>0$ and hence $K>0$.
Thus
\begin{equation}
\label{eq:multiplicities}
\sum_{x\in\mathcal X}h_x=H,
\qquad
\sum_{x\in\mathcal X}h_x^2\ge\frac{H^2}{K}.
\end{equation}

For each heavy segment, choose as in \cite{Zemor} a closest noncentral vertex $X$ to its center $O$ with $X+\nu\in S$.
For two distinct heavy segments $\sigma,\sigma'$ with the same associated vertex, let $O,O'$ be their centers and $\nu,\nu'$ their corresponding labels.
The ordered pair $\pi=(\sigma,\sigma')$ is called \emph{admissible} when
\begin{align*}
V+\nu'&\in Y &&(V\in\sigma),\\
V'+\nu&\in Y &&(V'\in[O',X']),\\
d_S(O,O')&\ge13,
\end{align*}
where $[O',X']$ denotes the subpath of $S$ from $O'$ to $X'$ and $d_S$ denotes distance along $S$.
Let $\mathcal J_x$ be the admissible ordered pairs associated with $x$, and put
$J=\sum_{x\in\mathcal X}|\mathcal J_x|$.

The counting argument in \cite[Lemma~6.2]{Zemor} gives
\begin{equation}
\label{eq:paircount}
J\ge\sum_{x\in\mathcal X}h_x(h_x-16).
\end{equation}
Moreover, \cite[Lemma~5.3]{Zemor} associates to each admissible pair
$\pi=(\sigma,\sigma')$ four distinct vertices
$u_1(\pi),\ldots,u_4(\pi)\in Y$ such that
\begin{equation}
\label{eq:outputroles}
u_i(\pi)+\nu'\in\sigma\quad(i=1,2,3),
\qquad
u_4(\pi)+\nu\in[O',X'],
\qquad
\sum_{i=1}^4\delta(u_i(\pi))\ge n.
\end{equation}

For completeness, we make explicit the four-output construction of \cite[Lemma~5.3]{Zemor}.
Write $X'=V'_k$, where $1\le k\le5$, and let $e'_1,\ldots,e'_k$ be the transition labels along $[O',X']$.
By admissibility, $\sigma+\nu'$ and $[O',X']+\nu$ lie in $Y$ and share the vertex
$O+\nu'=O'+\nu$.

The sequence $e'_1,\ldots,e'_k$ cannot agree term by term with the first $k$ transition labels along either arm of $\sigma$ from $O$.
Otherwise, $X'+\nu'\in S$ would be adjacent to both $X'$ and the corresponding vertex
$V\in\sigma$, although
\[
d_S(X',V)\ge d_S(O,O')-2k\ge3,
\]
contradicting inducedness.

If $e'_1$ differs from the first labels on both arms of $\sigma$, the common translated vertex has three distinct neighbors in $Y$.
Otherwise, follow the matching arm to the first differing label, which exists by the preceding argument.
The new translated vertex is distinct from both neighbors of the preceding vertex along $\sigma+\nu'$: 
its label differs from the forward one by choice, and from the backward one because the subpath cannot immediately backtrack.
Thus four distinct vertices of $Y$ form a three-leaf star, with three in $\sigma+\nu'$
and one in $[O',X']+\nu$, as required in
\eqref{eq:outputroles}.
The complement-degree inequality follows from \cite[Lemma~3.1]{Zemor}.

We now exploit the distinction that $u_1(\pi), u_2(\pi), u_3(\pi)$ and $u_4(\pi)$ use $\pi$ in different roles.
Recall from Definition~\ref{def:heavy} that if a heavy segment has associated vertex $x$, center $O$, and label $\nu$, then
\[
\nu=x+O;
\]
thus any two of $x$, $O$, and $\nu$ determine the third.

Lemmas~\ref{lem:rolecongestion} and~\ref{lem:rolecollision} derive the combinatorial inequality \eqref{eq:parametriccut} by retaining two pieces of structure from the
preceding argument: the number of heavy complement vertices (rather than replacing
that count by aggregate degree information), and the distinction between the first three outputs of an admissible pair and its fourth output.
Keeping the corresponding multiplicity bounds separate permits the two roles to be weighted differently.

\begin{lemma}
\label{lem:rolecongestion}
For each vertex $u\in Y$, let $\ell_3(u)$ denote the total number of times $u$ occurs among
\[
u_1(\pi),\ u_2(\pi),\ u_3(\pi)
\]
as $\pi$ ranges over all admissible ordered pairs, and let $\ell_4(u)$ denote the number of times $u=u_4(\pi)$.
Then
\begin{equation}
\label{eq:rolecongestion}
\ell_3(u)\le s(u),
\qquad
\ell_4(u)\le s(u).
\end{equation}
\end{lemma}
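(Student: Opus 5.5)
We construct, for each fixed $u\in Y$, an injective map from the occurrences of $u$ in each role into the set $N(u)\cap S$, which has cardinality $s(u)$. This gives both inequalities in \eqref{eq:rolecongestion}.

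\emph{The third-role bound $\ell_3(u)\le s(u)$.} Suppose $u=u_i(\pi)$ with $i\in\{1,2,3\}$ and $\pi=(\sigma,\sigma')$. By \eqref{eq:outputroles}, $u+\nu'$ is a vertex $V$ of $\sigma\subseteq S$, and $\nu'$ is a unit vector. Hence $V$ is a neighbor of $u$ lying on $S$. We charge the occurrence to the neighbor $w=u+\nu'\in N(u)\cap S$. It then suffices to show that each $w\in N(u)\cap S$ is charged at most once over all pairs $\pi$ and all $i\le3$.

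Given $w$, the label $\nu'=u+w$ is determined. Since the segments in $\mathcal H$ are pairwise vertex-disjoint, $w$ lies in at most one heavy segment, so $\sigma$ is determined. Then $x$, the associated vertex of $\sigma$, and $O'=x+\nu'$ are determined. The center $O'$ determines $\sigma'$, again by disjointness. So $\pi$ is determined by $w$. It remains to see that for fixed $\pi$ the three outputs $u_1,u_2,u_3$ are distinct vertices; this holds by \eqref{eq:outputroles}, since the four outputs are distinct. Hence $u$ occupies at most one of the first three slots of $\pi$, and the charging is injective.

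\emph{The fourth-role bound $\ell_4(u)\le s(u)$.} This is symmetric. If $u=u_4(\pi)$, then $w=u+\nu\in[O',X']\subseteq\sigma'\subseteq S$, so $w\in N(u)\cap S$. From $w$ we recover $\sigma'$, the unique heavy segment containing $w$, and with it $x$ and $O'$. We also recover $\nu=u+w$, hence $O=x+\nu$, hence $\sigma$. So $\pi$ is again determined by $w$, and $u=u_4(\pi)$ occurs at most once per $\pi$.

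\emph{Expected obstacle.} The delicate point is justifying that the relevant center determines its heavy segment. This needs that the segment in $\mathcal H$ with center $O$ and associated vertex $x$ is unique, and that $\sigma$ (respectively $\sigma'$) is genuinely recovered rather than merely constrained. Vertex-disjointness of $\mathcal H$ from \eqref{eq:packing} handles both cases, because any vertex of $S$ lies in at most one member of $\mathcal H$.

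I would also check that $w$ really lies in the stated segment: $\sigma$ in the first case and $[O',X']\subseteq\sigma'$ in the second. This is exactly the content of the membership conditions in \eqref{eq:outputroles}. Finally, the two roles must be counted separately; combining them would only give $\ell_3(u)+\ell_4(u)\le2s(u)$, which is why the lemma states them as two inequalities.
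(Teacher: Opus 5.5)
Your proposal is correct and follows the paper's proof essentially step for step. You charge each occurrence to the $S$-neighbor $u+\nu'$ (resp.\ $u+\nu$), recover $\sigma$ (resp.\ $\sigma'$) and $x$ from the vertex-disjointness of $\mathcal H$, recover the other center as $x$ plus the label, and finish with the distinctness of the four outputs — the step $O'=x+\nu'$ (resp.\ $O=x+\nu$) uses that both segments of an admissible pair share the associated vertex $x$, which is part of the definition of admissibility and is worth stating explicitly.
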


\begin{proof}
Fix an occurrence of $u$, and let $e$ be the label of the edge from $u$ to the vertex of $S$ specified in \eqref{eq:outputroles}.
Thus $e=\nu'$ for a first-three occurrence and $e=\nu$ for a fourth occurrence.

First suppose that $u$ occurs among $u_1,u_2,u_3$ for an admissible pair $\pi=(\sigma,\sigma')$.
Then $u+e\in\sigma$.
Since the heavy segments in $\mathcal H$ are pairwise vertex-disjoint, the vertex
$u+e$ determines $\sigma$ uniquely, and hence also its associated vertex $x$.
By admissibility, the other segment $\sigma'$ has the same associated vertex $x$ and has label $e$.
By the definition of a heavy segment, its center $O'$ therefore satisfies
\[
x+O'=e.
\]
Since addition is modulo $2$, it follows that
\[
O'=x+e.
\]
Hence $\sigma'$ is also uniquely determined, since two members of the pairwise vertex-disjoint family $\mathcal H$
cannot have the same center.
Thus $u$ and the edge $u(u+e)$ determine the ordered pair $\pi$ uniquely in the first-three role.

The fourth role is analogous.
If $u=u_4(\pi)$, then $u+e\in[O',X']\subseteq\sigma'$, so $u+e$ determines $\sigma'$ and
its associated vertex $x$ uniquely.
Now $\sigma$ has label $e$, hence its center is $O=x+e$, which determines $\sigma$ uniquely.
Thus the same edge determines the ordered pair uniquely in the fourth role as well.

Finally, the four outputs associated with a fixed ordered pair are distinct.
Therefore a given edge from $u$ to $S$ accounts for at most one first-three occurrence of $u$ and at most one fourth
occurrence.
Since $u$ has exactly $s(u)$ such edges,
$\ell_3(u)\le s(u)$, and $\ell_4(u)\le s(u)$.
\end{proof}
The two inequalities in \eqref{eq:rolecongestion} refine the total multiplicity estimate in \cite{Zemor}: keeping the two roles separate allows them to be weighted differently.

For a real number $r$, write $(r)_+=\max\{r,0\}$.
We will assign weight $4\delta(u)/n$ to a first-three occurrence of $u$
and weight $(4\delta(u)/n-1)_+$ to a fourth occurrence.
The coefficients below collect these two contributions for a vertex with
$s(u)=m$; the proof of the next lemma explains the choice of weights and
derives the resulting inequality.
For $0\le m\le n-1$, define
\begin{equation}
\label{eq:pm}
p_m=
\begin{cases}
7m-\dfrac{8m^2}{n},&0\le m\le M,\\[2mm]
\dfrac{4m(n-m)}{n},&M<m\le n-1,
\end{cases}
\end{equation}
\begin{lemma}
\label{lem:rolecollision}
For every $a>8$, the count variables of a maximum coil satisfy
\begin{equation}
\label{eq:parametriccut}
\sum_{m=0}^{n-1}p_m y_m+3a^2\eta(y)
\ge \frac{a-8}{3}(S_n-10).
\end{equation}
\end{lemma}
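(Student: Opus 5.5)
The plan is to double count a weighted total over all admissible ordered pairs, bound it below using \eqref{eq:outputroles} and above using Lemma~\ref{lem:rolecongestion}, and then bring in $\eta(y)$ through the multiplicity bounds \eqref{eq:multiplicities} and \eqref{eq:packing}. Define
\[
W=\sum_{\pi}\Bigl[\sum_{i=1}^3\frac{4\delta(u_i(\pi))}{n}+\Bigl(\frac{4\delta(u_4(\pi))}{n}-1\Bigr)_+\Bigr],
\]
where $\pi$ ranges over all admissible ordered pairs, so there are $J$ terms.

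\emph{Lower bound.} For each $\pi$, use $(t)_+\ge t$. The bracket is then at least $\frac4n\sum_{i=1}^4\delta(u_i(\pi))-1$. By \eqref{eq:outputroles} this is at least $4-1=3$, so $W\ge 3J$. By \eqref{eq:paircount} and then \eqref{eq:multiplicities},
\[
J\ge\sum_{x\in\mathcal X}h_x^2-16H\ge \frac{H^2}{K}-16H.
\]
This holds even if the right side is negative, since $J\ge0$ is not needed.

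\emph{Upper bound.} Regroup $W$ by the output vertex $u$. This gives
\[
W=\sum_{u\in Y}\Bigl[\ell_3(u)\frac{4\delta(u)}{n}+\ell_4(u)\Bigl(\frac{4\delta(u)}n-1\Bigr)_+\Bigr].
\]
Both weights are nonnegative, so Lemma~\ref{lem:rolecongestion} bounds each $\ell_j(u)$ by $s(u)=m$. Take $u\in Y_m$, so $\delta(u)=n-m$. Then $4(n-m)/n-1\ge0$ exactly when $m\le 3n/4$, that is, when $m\le M$ because $m$ is an integer. The contribution of $u$ is therefore at most:
\begin{itemize}
\item for $m\le M$: $m\bigl(\tfrac{8(n-m)}{n}-1\bigr)=7m-\tfrac{8m^2}{n}$;
\item for $M<m\le n-1$: $\tfrac{4m(n-m)}{n}$.
\end{itemize}
This is precisely $p_m$ from \eqref{eq:pm}. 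Using $y_n=0$ from \eqref{eq:Yn}, we get $W\le\sum_{m=0}^{n-1}p_my_m$.

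\emph{Combining.} The two bounds give $\sum p_my_m\ge 3H^2/K-48H$. Add $3a^2\eta(y)\ge 3a^2K$, which holds because $K\le\eta(y)$. By AM--GM, $H^2/K+a^2K\ge 2aH$, and therefore
\[
\sum_{m=0}^{n-1} p_my_m+3a^2\eta(y)\ge 6aH-48H=6(a-8)H.
\]
Since $a>8$, the coefficient is positive, so \eqref{eq:packing} gives $6(a-8)H\ge \frac{6(a-8)(S_n-10)}{18}=\frac{a-8}{3}(S_n-10)$, which is \eqref{eq:parametriccut}.

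The main point needing care is the upper bound. One must check that the fourth-role weight vanishes exactly on $m>M$, so that the piecewise form of $p_m$ matches the threshold $M=\lfloor 3n/4\rfloor$. One must also check that Lemma~\ref{lem:rolecongestion} may be applied to each role separately, which is valid because both weights are nonnegative.
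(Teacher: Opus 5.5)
Your proof is correct and follows the paper's argument: you use the same role weights, the same regrouping through Lemma~\ref{lem:rolecongestion} to get $3J\le\sum_m p_my_m$, and the same finish via $H^2/K+a^2K\ge 2aH$, $K\le\eta(y)$, and \eqref{eq:packing}. The only difference is cosmetic: you get the per-pair lower bound of $3$ directly from $(t)_+\ge t$, where the paper splits into cases according to whether $t_4\ge 1/4$.
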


\begin{proof}
For $u\in Y$, put $t_u=\delta(u)/n$.
Now, for an admissible pair $\pi$, write
\[
t_i=t_{u_i(\pi)},\qquad i=1,\ldots,4.
\]
By \eqref{eq:outputroles},
\[
t_1+t_2+t_3+t_4\ge1.
\]
Give an occurrence in one of the first three roles weight $4t_u$, and an occurrence in the fourth role weight $(4t_u-1)_+$.
The total weight assigned to $\pi$ is therefore
\[
4(t_1+t_2+t_3)+(4t_4-1)_+.
\]
If $t_4\ge1/4$, this equals
\[
4(t_1+t_2+t_3+t_4)-1\ge3.
\]
If $t_4<1/4$, then
\[
t_1+t_2+t_3\ge1-t_4>\frac34,
\]
so the first three roles alone have total weight greater than $3$.
Thus every admissible pair receives total weight $\ge 3$.
Note that the threshold $t_4=1/4$ is exactly the heavy-vertex threshold: when the fourth output is heavy it can absorb the subtraction of one, otherwise the first three outputs must carry more than three quarters of the total normalized complement degree.
Using \eqref{eq:rolecongestion} and grouping vertices according to $s(u)=m$ therefore gives
\[
3J\le\sum_{m=0}^{n-1}p_my_m,
\]
since
\[
m\left[4\frac{n-m}{n}+\left(4\frac{n-m}{n}-1\right)_+\right]=p_m.
\]

From \eqref{eq:paircount} and \eqref{eq:multiplicities},
\[
J\ge\frac{H^2}{K}-16H.
\]
For fixed $H$ and $K$, the function $2aH-a^2K$
is a concave quadratic in $a$, with maximum $H^2/K$.
Consequently,
\[
\frac{H^2}{K}\ge2aH-a^2K
\]
for every $a>0$.  
Hence, for $a>8$,
\[
\sum_{m=0}^{n-1}p_my_m
\ge6(a-8)H-3a^2K.
\]
Every associated vertex has complement degree at least $n/4$, so $K\le\eta(y)$.  Using this and \eqref{eq:packing}, whose coefficient is positive because $a>8$, yields
\[
\sum_{m=0}^{n-1}p_my_m+3a^2\eta(y)
\ge6(a-8)H
\ge\frac{a-8}{3}(S_n-10),
\]
which is the desired inequality \eqref{eq:parametriccut}.
\end{proof}

\section{A linear programming upper bound for \texorpdfstring{$S_n$}{Sn}}\label{sec:LP}

\subsection{The linear program}\label{subsec:LPmodel}
Fix $a>8$ and put
\[
q=n^2-3n+3,
\qquad
\beta_a=\frac{a-8}{3}.
\]
Using $p_m$ from \eqref{eq:pm}, consider the following linear program
with variables $z,y_0,\ldots,y_{n-1}$:
\[
\begin{array}{r@{\quad}rcl@{\quad}l}
(\mathrm P_n(a))
&\text{maximize}&z&&\\[1mm]
&\text{subject to}
&\displaystyle\sum_{m=0}^{n-1}y_m
&=2N-z&\text{\rm(C1)}\\[2mm]
&&\displaystyle\sum_{m=0}^{n-1}my_m
&=(n-2)z&\text{\rm(C2)}\\[2mm]
&&(n-1)y_{n-1}+4y_{n-2}-y_{n-3}
&\le2z&\text{\rm(C3)}\\[1mm]
&&qz-\displaystyle\sum_{m=0}^{n-1}\binom m2y_m
&\le\displaystyle\binom n2N&\text{\rm(C4)}\\[2mm]
&&\displaystyle\sum_{m=0}^{M}my_m
&\ge z/8&\text{\rm(C5)}\\[2mm]
&&\displaystyle\sum_{m=0}^{n-1}p_my_m
       +3a^2\sum_{m=0}^{M}y_m
&\ge\beta_a(z-10)&\text{\rm(C6$_a$)}\\[2mm]
&&y_m&\ge0\quad(0\le m\le n-1)&\text{\rm(C7)}.
\end{array}
\]
Observe that while $z$ is formally unrestricted in sign, (C2) and (C7) imply $z\ge0$.

A maximum coil determines a feasible point by taking $z=S_n$ and
$y_m=|Y_m|$. The identity \eqref{eq:Yn} makes (C1)--(C2) precisely
\eqref{eq:Ycount}--\eqref{eq:SYedgecount}. Constraints (C3)--(C5) are
Lemmas~\ref{lem:Yn2refinement}--\ref{lem:Zprojection}, and (C6$_a$) is
\eqref{eq:parametriccut}; nonnegativity gives (C7). Thus
\begin{equation}
\label{eq:parametricfeasible}
S_n\le\OPT(\mathrm P_n(a))
\qquad(a>8).
\end{equation}
We will determine this optimal value on an explicit range of parameters.

\subsection{A one-parameter family of primal solutions}
\label{subsec:primalfamily}
Before presenting an optimal family of solutions to $(\mathrm P_n(a))$ (and hence deriving upper bounds on $S_n$ from their objective values) we will explain how they were discovered.
Numerical experiments in small dimensions suggested an optimal solution supported on $\{0,M,n-3,n-2\}$, with (C3)--(C5) tight.
We also observed that (C5) is tight, but it seems plausible that as $n$ grows constraint (C6$_a$) may become binding for some or all values of $a$.
This is motivated combinatorially by the fact that (C5) and (C6$_a$) are similar, both having been derived from the machinery introduced in \cite{Zemor}, which focused on an asymptotic upper bound of $S_n$.

So we assume the following ansatz:
the primal LP has a solution with only $z,y_0,y_M,y_{n-3},y_{n-2}$ nonzero;
constraints (C1)--(C4) are always tight;
as $n$ grows the constraint (C6$_a$) may displace (C5) as the 5th active constraint.

To simplify notation and make the underlying ideas more clear, we introduce the following helper notation:
\begin{equation}
\label{eq:LPconstants}
\begin{aligned}
d&=5n-14,& b&=n(n-1)d,\\
T&=5n^3-17n^2+4n+12,&
E&=5n^2-28n+40-dM,\\
R&=n^2-14n+32.
\end{aligned}
\end{equation}
\noindent For technical reasons, observe that for $n\ge 13$ we have
\[
E\ge\frac54R>0,
\qquad
T-b=2(n-2)(n-3)>0.
\]

The ansatz leaves us with 5 variables, 4 fixed equations (C1)--(C4) and a varying 5th equation (C5) or (C6$_a$).
Rather than parameterizing directly on $n$ and $a$, let us introduce the parameter:
\[
\xi=\frac{My_M}{z}.
\]
\noindent Then, solving for (C1)--(C4) in the ansatz gives
\begin{equation}
\label{eq:primalfamily}
\begin{aligned}
z(\xi)&=\frac{Nb}{T+E\xi},
& y_M(\xi)&=\frac{\xi}{M}z(\xi),\\
y_{n-3}(\xi)&=\frac{2n-4-4\xi}{d}z(\xi),
& y_{n-2}(\xi)&=\frac{3n-8-\xi}{d}z(\xi),\\
y_0(\xi)&=2N-z(\xi)-y_M(\xi)-y_{n-3}(\xi)-y_{n-2}(\xi).
\end{aligned}
\end{equation}
\noindent and all other $y_m = 0$.
It can be verified (see Lemma~\ref{lem:primalfamily}) that $z$ and $y_m \ge 0$ precisely when 
\begin{equation}
\label{eq:xirange}
0\le\xi\le\frac{n-2}{2}.
\end{equation}
\noindent Meanwhile, the slacks corresponding to (C5) and (C6$_a$) are:
\begin{equation*}
\begin{aligned}
s_5(\xi)&=z(\xi)(\xi-1/8),\\
s_{6a}(\xi)&=\sum_{m=0}^{n-1}p_my_m(\xi)+3a^2\eta(y(\xi))
              -\beta_a\bigl(z(\xi)-10\bigr).
\end{aligned}
\end{equation*}
\noindent A feasible solution requires $s_5(\xi)\ge0$ and $s_{6a}(\xi)\ge0$.
For $s_5(\xi)$ this is obviously $\xi\ge 1/8$.
For $s_{6a}(\xi)$, substitution of \eqref{eq:primalfamily} gives:
\begin{equation}
\label{eq:xislack}
s_{6a}(\xi)=z(\xi)\bigl(\sigma_0(a)+\sigma_1(a)\xi\bigr),
\end{equation}
where
\begin{equation}
\label{eq:sigmadef}
\begin{aligned}
\sigma_0(a)
&=\frac{8(n-2)(6n-17)}{nd}-\beta_a
  +\frac{6a^2(n^2-9n+12)}{b}
  +\frac{10\beta_a T}{Nb},\\
\sigma_1(a)
&=6-\frac{8M}{n}+\frac{5R}{nd}
  +3a^2\left(\frac5d+\frac{2E}{b}\right)
  +\frac{10\beta_a E}{Nb}.
\end{aligned}
\end{equation}
\noindent In particular, $\sigma_1(a)>0$ since $M\le 3n/4$ and $E,R > 0$.
Thus $s_{6a}(\xi)\ge 0$ requires
\[
\xi\ge\xi_6(a):=-\frac{\sigma_0(a)}{\sigma_1(a)}.
\]
\noindent Finally, observe that since $E>0$, $z(\xi)$ is a decreasing function of $\xi$.
Hence we may assume $\xi$ is chosen as the smallest value $\xi^*(a)$ for which $s_5(\xi)\ge0$ and $s_{6a}(\xi)\ge0$.
Thus 
\begin{equation}
\label{eq:xistar}
\xi^*(a)=\max\left\{1/8,\xi_6(a)\right\}.
\end{equation}
\noindent Thus, for each fixed $a>8$, the value of $\xi^*(a)$ is determined entirely by $n$.

The choice of $\xi^*(a)$ determines $z(\xi), y_0(\xi), y_M(\xi), y_{n-3}(\xi), y_{n-2}(\xi)$ and hence the objective function value of this primal feasible solution.

\begin{lemma}
\label{lem:primalfamily}
For every $a>8$, the point $z, y_0, y_M, y_{n-3},  y_{n-2}$ from \eqref{eq:primalfamily} with
$\xi=\xi^*(a)$ is feasible for $(\mathrm P_n(a))$ and has objective value
\[
N\min\{L_n,w_n(a)\}
\]
where
\begin{align}
L_n
&:=\frac{b}{T+E/8}
=\frac{8n(n-1)(5n-14)}
 {40n^3-131n^2+4n+136-(5n-14)M},
\label{eq:Ln}\\
w_n(a)
&:=\frac{b\sigma_1(a)}
 {T\sigma_1(a)-E\sigma_0(a)}.
\label{eq:wna}
\end{align}
\noindent In addition, the four coordinates $y_0, y_M, y_{n-3}, y_{n-2}$ are positive.
\end{lemma}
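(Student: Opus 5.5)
The plan is to verify directly that the explicit point \eqref{eq:primalfamily} with $\xi=\xi^*(a)$ satisfies every constraint of $(\mathrm P_n(a))$, and then to evaluate $z(\xi^*(a))$.

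\textbf{Step 1: the equality and linear constraints.} Substituting \eqref{eq:primalfamily} into (C2)--(C4) should reduce each to an identity in $\xi$. (C1) holds by the definition of $y_0$. For (C2), (C3) and (C4), I would substitute $y_M=\xi z/M$, $y_{n-3}$ and $y_{n-2}$ and check that the coefficients of $z$ and of $\xi z$ agree. For (C4) this uses the relation $z(T+E\xi)=Nb$ together with $q$ and $\binom n2$. The $\xi$-coefficients of (C3) work out as $4(-\xi)-(-4\xi)=0$, and the constant term is $[4(3n-8)-(2n-4)]/d=(10n-28)/d=2$. These are routine polynomial verifications using the constants in \eqref{eq:LPconstants}.

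\textbf{Step 2: the inequalities (C5), (C6$_a$) and nonnegativity.} The slack formula $s_5=z(\xi-1/8)$ and the factorization \eqref{eq:xislack} are already given. Because $\xi^*(a)\ge 1/8$ and $\xi^*(a)\ge\xi_6(a)$ with $\sigma_1(a)>0$, both slacks are nonnegative.

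For (C7) and positivity, I need $0<\xi^*(a)<(n-2)/2$ and $y_0>0$. Positivity of $y_M$ follows from $\xi>0$ and $z>0$. Positivity of $y_{n-3}$ follows from $\xi<(n-2)/2$. For $y_{n-2}$, we have $3n-8-\xi>0$ since $\xi<(n-2)/2<3n-8$.

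For $y_0$, write
\[
y_0=2N-z\left(1+\frac{\xi}{M}+\frac{5n-12-5\xi}{d}\right).
\]
Using $z\le Nb/T$ and $b<T$, it suffices to bound the bracket, which is at most about $2+\xi/M$, by $2T/b$ minus a margin. I expect this to follow from the bound $T-b=2(n-2)(n-3)$ once $\xi$ is bounded by a constant. I would double-check the direction: the expression $z(1+\xi/M-5\xi/d)$ is decreasing in $\xi$, so checking the worst case should be manageable.

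\textbf{Step 3: the upper bound on $\xi^*(a)$, the main obstacle.} The real difficulty is showing $\xi_6(a)<(n-2)/2$ for every $a>8$, because $\sigma_0(a)$ can be very negative through the term $-\beta_a$. Since $\sigma_1(a)$ grows like $a^2$ while $-\sigma_0(a)$ grows only linearly in $a$ (the $a^2$ term in $\sigma_0$ is positive), I would bound
\[
\xi_6(a)\le \frac{\beta_a+C}{\sigma_1(a)}.
\]
Using $\sigma_1(a)\ge 6-8M/n\ge 0$ plus $15a^2/d$, and then maximizing over $a$, the ratio is $O(\sqrt n)$. This is well below $(n-2)/2$ for $n\ge13$. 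The term $10\beta_aT/(Nb)$ only helps, and if it needs control, it is tiny because $N=2^{n-1}$. The explicit inequality for moderate $n$ near $13$ may need a direct numeric check.

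\textbf{Step 4: the objective value.} Since $z(\xi)=Nb/(T+E\xi)$ is decreasing in $\xi$, we get $z(\xi^*)=N\min\{b/(T+E/8),\,b/(T+E\xi_6)\}$. Substituting $\xi_6=-\sigma_0/\sigma_1$ gives $b\sigma_1/(T\sigma_1-E\sigma_0)=w_n(a)$; the denominator is positive because $T+E\xi_6>0$ when $\xi_6>0$. Finally, expanding $8T+E$ yields the stated closed form of $L_n$.
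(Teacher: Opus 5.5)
\textbf{Gap 1: the argument for $y_0>0$.} This step does not work as proposed. Bounding $z\le Nb/T$ discards the $E\xi$ term. The margin that remains is $2T/b-(2+2/d)=2(n^2-9n+12)/b\approx 2/(5n)$, while your bracket grows at rate $1/M-5/d\approx 1/(3n)$. So this route only certifies $y_0>0$ for $\xi\lesssim 6/5$.

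But $\xi^*(a)$ is not bounded by a constant uniformly in $n$. For $a$ of order $\sqrt n$, $\xi_6(a)$ grows like a multiple of $\sqrt n$; your own Step~3 only claims $O(\sqrt n)$. For example, $n=10^4$, $a\approx47$ gives $\xi_6(a)\approx1.6$, and the lemma is asserted for every $a>8$.

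Your fallback is also misstated. The expression $z\,(1+\xi/M-5\xi/d)$ is actually increasing in $\xi$: its derivative has the sign of $(1/M-5/d)T-E$, which is about $107-62$ at $n=13$ and $\sim 5n^2/12$ for large $n$. Only $z$ times the full bracket, including the constant $(5n-12)/d$, is decreasing, and you have not checked that.

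The missing step is to keep $2N/z=2(T+E\xi)/b$ exactly. This gives
\[
\frac{y_0}{z}=\frac{2(n^2-9n+12)}{b}+\Bigl(\frac{2E}{b}+\frac5d-\frac1M\Bigr)\xi ,
\]
so positivity for every $\xi\ge0$, with no upper bound on $\xi$ needed, reduces to the slope inequality $2E/b+5/d>1/M$. This comparison is delicate: roughly $3/(2n)$ against $4/(3n)$, all of the same order. It also has to handle the floor in $M$. The paper proves it by noting that the slope is concave in $M$ on $[3(n-1)/4,3n/4]$ and evaluating at the endpoints; the smaller value has numerator $5n^2-88n+368>0$.

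\textbf{Gap 2: Step~4 misses the case $\xi_6(a)\le0$.} You justify $T+E\xi_6(a)>0$ only when $\xi_6(a)>0$. The case that matters is $\xi_6(a)\le 0$, which occurs for instance when $a$ is close to $8$, where $\sigma_0(a)>0$. Then $\xi^*(a)=1/8$, and writing the objective as $N\min\{L_n,w_n(a)\}$ requires $w_n(a)\ge L_n$, hence $T\sigma_1(a)-E\sigma_0(a)>0$. Without this, $w_n(a)$ could be negative and the minimum would be wrong. The paper obtains it from the identity
\[
T\sigma_1(a)-E\sigma_0(a)=T\Bigl(c_1+\frac{15a^2}{d}\Bigr)+E\Bigl[\beta_a+6a^2\Bigl(1+\frac1d\Bigr)-c_0\Bigr],
\]
where $c_0=8(n-2)(6n-17)/(nd)<10<6a^2$ and $c_1=6-8M/n+5R/(nd)>0$.

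\textbf{Step~3 is fine.} The step you single out as the main obstacle is correct and easier than you expect. Every term of $\sigma_0(a)$ except $-\beta_a$ is nonnegative, so you may take $C=0$. With $\sigma_1(a)\ge 15a^2/d$ you get $\xi_6(a)\le (a-8)d/(45a^2)\le d/1440<(n-2)/2$ for all $n\ge13$, with no numerical check needed. The paper argues instead that $s_{6a}((n-2)/2)>0$, using $y_{n-3}=0$ and $\eta(y)=2N-\tfrac32 z>\tfrac12 z$ at that endpoint.
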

\begin{proof}
First we verify that (C1)--(C4) are satisfied and $(z, y_0, y_M, y_{n-3},  y_{n-2})\ge 0$ when $\xi^*(a)\in \left[0, \frac{n-2}{2}\right]$.
Direct substitution into (C1)--(C4) verifies these four equalities.
Verifying non-negativity of $y_M, y_{n-3},  y_{n-2}$ is trivial, the only nontrivial nonnegativity check is $y_0$.
From \eqref{eq:primalfamily},
\begin{equation}
\label{eq:y0xi}
\frac{y_0}{z}
=\frac{2(n^2-9n+12)}{b}
+\left(\frac{2E}{b}+\frac5d-\frac1M\right)\xi.
\end{equation}
For fixed $n$, the coefficient of $\xi$ is a concave function of $M$.
At the endpoints of $3(n-1)/4\le M\le3n/4$, its values are,
respectively,
\[
\frac{5n^2-83n+354}{6b},
\qquad
\frac{5n^2-88n+368}{6b}.
\]
The first exceeds the second by $d/(6b)$, and the second is positive
because its numerator is
\[
5(n-13)^2+42(n-13)+69.
\]
Since $n^2-9n+12>0$, \eqref{eq:y0xi} gives $y_0>0$ for $\xi\ge0$.
The coordinates $y_M$ and $y_{n-3}$ impose
$0\le\xi\le(n-2)/2$; throughout this interval,
$y_{n-2}\ge z/2>0$. This proves \eqref{eq:xirange}.
The inequality $T>b$ also gives $0<z<N$.

To verify \eqref{eq:xislack}--\eqref{eq:sigmadef}, observe that
\[
\frac{\eta(y)}{z}
=\frac{2(n^2-9n+12)}{b}
+\left(\frac5d+\frac{2E}{b}\right)\xi,
\qquad
\frac{10\beta_a}{z}
=\frac{10\beta_a(T+E\xi)}{Nb}.
\]
Using \eqref{eq:pm} gives
\[
\frac{\sum_m p_my_m}{z}=c_0+c_1\xi,
\]
where
\[
c_0=\frac{8(n-2)(6n-17)}{nd},
\qquad
c_1=7-\frac{8M}{n}-\frac{8(7n-20)}{nd}
=6-\frac{8M}{n}+\frac{5R}{nd}>0.
\]
Combining these identities gives the claimed slack formula.
It also yields
\[
\begin{aligned}
T\sigma_1(a)-E\sigma_0(a)
={}&T\left(c_1+\frac{15a^2}{d}\right)\\
&+E\left[\beta_a+6a^2\left(1+\frac1d\right)-c_0\right]>0.
\end{aligned}
\]
Here $c_0<10$, since $10nd-8(n-2)(6n-17)=2n^2+92n-272>0$,
whereas $6a^2>384$.
Thus \eqref{eq:wna} is positive and well-defined for every $a>8$.

Next we check that $\xi^*(a)\in \left[0, \frac{n-2}{2}\right]$.
At $\xi=(n-2)/2$, \eqref{eq:primalfamily} gives
\[
y_{n-3}=0,\qquad y_{n-2}=z/2,\qquad
\eta(y)=2N-\frac32z>\frac12z,
\]
where the last inequality follows from $z<N$.
Since $p_m\ge0$ and $a>8$,
\[
s_{6a}((n-2)/2)>
\left(\frac32a^2-\beta_a\right)z+10\beta_a>0.
\]
By \eqref{eq:xislack}, $\xi_6(a)<(n-2)/2$, and hence
$0<\xi^*(a)<(n-2)/2$.

The point \eqref{eq:primalfamily} therefore satisfies (C1)--(C4) and (C7), and $y_0, y_M, y_{n-3}, y_{n-2}>0$.
Equations \eqref{eq:xislack}--\eqref{eq:xistar} ensure (C5)--(C6$_a$).
Finally, $z(1/8)=NL_n$ and $z(\xi_6(a))=Nw_n(a)$.
Since $\sigma_1(a)>0$ and
$T\sigma_1(a)-E\sigma_0(a)>0$, as established above,
\[
T+E\xi_6(a)
=\frac{T\sigma_1(a)-E\sigma_0(a)}{\sigma_1(a)}>0.
\]
Thus both evaluations lie on the same decreasing branch of $z(\xi)$. 

Recalling that
$\xi^*(a)=\max\{1/8,\xi_6(a)\}$, 
we obtain
\[
z(\xi^*(a))
=\min\{z(1/8),z(\xi_6(a))\}
=N\min\{L_n,w_n(a)\},
\]
as claimed.
The second identity is an algebraic evaluation; the point at $\xi_6(a)$ is used only when $\xi_6(a)\ge1/8$.
\end{proof}

If $\xi_6(a)<1/8$, then (C5) is tight and (C6$_a$) has positive slack. 
If $\xi_6(a)>1/8$, the reverse holds. 
At equality, both rows are tight. 
It remains to show that the constructed solution is optimal over the
whole feasible region, and thus provides an upper bound on $S_n$.

\subsection{Dual solutions and optimality}\label{subsec:dual}
Put
\[
\alpha_m=(n-1)\ind_{\{m=n-1\}}
+4\ind_{\{m=n-2\}}-\ind_{\{m=n-3\}},
\qquad
r_m(a)=p_m+3a^2\ind_{\{m\le M\}}.
\]
Assign dual variables $\lambda_1,\ldots,\lambda_6$ to
(C1)--(C6$_a$), respectively.
The equality-row variables $\lambda_1,\lambda_2$ are free, while
$\lambda_3,\lambda_4,\lambda_5,\lambda_6\ge0$.
The dual is
\[
\begin{aligned}
(\mathrm D_n(a))\qquad\text{minimize}\quad
&2N\lambda_1+\binom n2N\lambda_4+10\beta_a\lambda_6\\[1mm]
\text{subject to}\quad
&\lambda_1-(n-2)\lambda_2-2\lambda_3+q\lambda_4
 +\frac{\lambda_5}{8}+\beta_a\lambda_6=1,\\
&\lambda_1+m\lambda_2+\alpha_m\lambda_3
 -\binom m2\lambda_4-m\ind_{\{m\le M\}}\lambda_5
 -r_m(a)\lambda_6\ge0,\\
&\hspace{7cm}0\le m\le n-1,\\
&\lambda_3,\lambda_4,\lambda_5,\lambda_6\ge0,
 \qquad\lambda_1,\lambda_2\ \text{free}.
\end{aligned}
\]
The first dual constraint is an equality because $z$ is free.

The positive coordinates in Lemma~\ref{lem:primalfamily} suggest
imposing equality in the dual constraints at $0,M,n-3,n-2$.
If (C5) is tight and (C6$_a$) is slack, complementary slackness
requires $\lambda_6=0$; in the other regime it requires $\lambda_5=0$.
Solving the four tight equations with each of these choices gives the
two columns below, before the common scale is fixed.
Write
\[
\gamma=3a^2n-R.
\]
\begin{equation}
\label{eq:dualcolumns}
\begin{array}{c|cc}
\text{coordinate}&\ell&\rho\\ \hline
\lambda_1&0&(R+\gamma)E\\[1mm]
\lambda_2&(n-3)(5n-18)&(7n-8)E+5(M-1)\gamma\\[1mm]
\lambda_3&(n-3)(n-2)&(n-4)E+(2n-5-M)\gamma\\[1mm]
\lambda_4&2d&2(8E+5\gamma)\\[1mm]
\lambda_5&E&0\\[1mm]
\lambda_6&0&nE
\end{array}
\end{equation}
The normalizing denominators imposed by the dual equality are
\[
D_L=T+\frac E8,
\qquad
\mathcal D_n(a)=
\rho_1-(n-2)\rho_2-2\rho_3+q\rho_4+\beta_a\rho_6.
\]
Here the subscripts on $\rho$ refer to the coordinates in
\eqref{eq:dualcolumns}.
We use the parameter set
\begin{equation}
\label{eq:admissible_set}
\mathcal A_n=\{a>8:3a^2n\ge R\}.
\end{equation}
The inequality (C6$_a$) is valid for every $a>8$;
\eqref{eq:admissible_set} is the range on which the second dual
solution will be feasible.

\begin{lemma}\label{lem:dual_solutions}
For every $a>8$, the vector $\ell/D_L$ is feasible for
$(\mathrm D_n(a))$ and has objective value $NL_n$.
For $a\in\mathcal A_n$, the vector $\rho/\mathcal D_n(a)$ is also
feasible and has objective value $Nw_n(a)$.
Both vectors have zero slack in the dual constraints indexed by
$0,M,n-3,n-2$.
\end{lemma}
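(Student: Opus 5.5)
The proof is a direct verification of the dual rows. First define the dual slack for each column index $m$ as an explicit function of $m$:
\[
g_m(\lambda)=\lambda_1+m\lambda_2+\alpha_m\lambda_3-\binom m2\lambda_4-m\ind_{\{m\le M\}}\lambda_5-r_m(a)\lambda_6 .
\]
Then check, for each of the two columns $\ell$ and $\rho$, three things: the normalized equality row, the sign conditions on $\lambda_3,\dots,\lambda_6$, and $g_m\ge0$ for $0\le m\le n-1$. The objective values then come for free. The four slacks $g_0,g_M,g_{n-3},g_{n-2}$ vanish, so complementary slackness pairs each dual vector with the primal point of Lemma~\ref{lem:primalfamily} at $\xi=1/8$ (respectively $\xi=\xi_6(a)$). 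That point has (C1)--(C4) tight and (C5) (respectively (C6$_a$)) tight. Hence the dual objective equals $z$ there, namely $NL_n$ (respectively $Nw_n(a)$). As a check, compute $2N\lambda_1+\binom n2N\lambda_4+10\beta_a\lambda_6$ directly. For $\ell$ this is $\binom n2N\cdot2d/D_L=Nb/D_L=NL_n$, since $D_L=T+E/8$.

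\textbf{Column $\ell$.} Set $\lambda_6=0$. On the range $M<m\le n-1$ the slack $g_m$ is a concave quadratic in $m$ (leading coefficient $-\lambda_4/2<0$) plus the correction $\alpha_m\lambda_3$, which is supported at $n-3,n-2,n-1$. Likewise on $0\le m\le M$ it is a concave quadratic in $m$.
\begin{itemize}
\item The equation $g_0=0$ gives $\lambda_1=0$.
\item On $[0,M]$, a concave function vanishing at the endpoints $0$ and $M$ is nonnegative in between. So $g_m\ge0$ there.
\item On $[M+1,n-4]$ (where $\alpha_m=0$) the quadratic is $m\lambda_2-\binom m2\lambda_4$, with roots $0$ and $1+2\lambda_2/\lambda_4=1+(n-3)(5n-18)/d$. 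This is $\ge n-4$ for $n\ge13$, so $g_m\ge0$ on that range.
\item Verify $g_{n-3}=g_{n-2}=0$ and compute $g_{n-1}$ explicitly; it should be a positive polynomial for $n\ge13$.
\item The normalization equality row holds because $D_L$ is exactly $\ell_1-(n-2)\ell_2-2\ell_3+q\ell_4+\ell_5/8$, which I expect to reduce algebraically to $T+E/8$. Positivity $D_L>0$ follows from $T,E>0$.
\end{itemize}

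\textbf{Column $\rho$.} Now $\lambda_5=0$ and $\lambda_6=nE$. The term $r_m\lambda_6$ contributes $-nE\,p_m-3a^2nE\ind_{\{m\le M\}}$.
\begin{itemize}
\item On $m\le M$ this is $-E(7nm-8m^2)-3a^2nE$. The slack is again a quadratic in $m$, now with leading coefficient $-\lambda_4/2+8E=-5\gamma<0$ whenever $\gamma>0$.
\item On $m>M$ the term is $-4Em(n-m)$, so the leading coefficient is $-\lambda_4/2-4E<0$.
\item In both regimes the slack is concave. Tightness at $0$ and $M$ gives nonnegativity on $[0,M]$.
\item On $(M,n-4]$, evaluate the concave quadratic at the two ends $M+1$ and $n-4$. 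The value at $n-4$ can be read off from the tight value at $n-3$ together with the removal of $-\lambda_3$. The value at $M+1$ uses the jump $+3a^2nE$ across $M$. At both ends the quadratic should be positive, each linear in $\gamma$ with nonnegative coefficients once $\gamma\ge0$.
\item Check $g_{n-1}\ge0$ directly.
\item The sign conditions: $\rho_3\ge0$ needs $2n-5-M>0$, $\rho_4>0$ is clear, and both need $\gamma\ge0$. This is exactly where the hypothesis $a\in\mathcal A_n$, i.e. $3a^2n\ge R$, enters.
\item Finally show $\mathcal D_n(a)>0$, so the scaling preserves signs. I expect $\mathcal D_n(a)$ to be a positive multiple of $T\sigma_1(a)-E\sigma_0(a)$ (times $nd$ or similar), which Lemma~\ref{lem:primalfamily} showed positive. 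That identification also yields the objective value $Nb\sigma_1/(T\sigma_1-E\sigma_0)=Nw_n(a)$.
\end{itemize}

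The main obstacle is the algebraic bookkeeping in the $\rho$ case. Two points need care: confirming that the four tight equations really hold with the stated entries, which involves the floor $M$ only through $E$ and $\gamma$, and identifying $\mathcal D_n(a)$ with a positive multiple of $T\sigma_1-E\sigma_0$. I would handle both by treating every quantity as a polynomial in $n$, $M$ and $\gamma$, and substituting $E=5n^2-28n+40-dM$ only at the end. For the inequalities, the delicate range is $m$ just above $M$, and also $m=n-1$, where $\alpha_{n-1}=n-1$ must compensate. There I would write the slack as $A+B\gamma$ with $A,B$ polynomials in $n$ and $M$, and bound them using $3(n-1)/4\le M\le3n/4$ and $n\ge13$, in the same way as the $y_0$ check in Lemma~\ref{lem:primalfamily}.
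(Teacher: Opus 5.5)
Your proposal is correct. Its feasibility part is the paper's argument in slightly different form. The paper writes the slacks in closed form: $dm(M-m)$, resp.\ $5\gamma m(M-m)$, on $[0,M]$, and $m\bigl((n-3)(5n-18)-d(m-1)\bigr)$, resp.\ $Ef(j)+\frac{\gamma}{4}\bigl(5f(j)+\varepsilon(5j-14)\bigr)$ with $j=n-m$, on $(M,n-4]$. Its endpoint check of the concave $f$ on $[4,n/4]$ is exactly your concavity-plus-endpoints argument.

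The genuine difference is the objective value. The paper verifies $\mathcal D_n(a)=n\bigl(T\sigma_1(a)-E\sigma_0(a)\bigr)$ and $2\rho_1+\binom n2\rho_4+10\beta_a\rho_6/N=nb\sigma_1(a)$ by substitution. You instead pair each column with the point \eqref{eq:primalfamily} at $\xi=1/8$ or $\xi_6(a)$. That pairing is a purely algebraic identity. It needs only the four zero slacks, $\lambda_6=0$ (resp.\ $\lambda_5=0$), and tightness of the remaining rows, not primal feasibility, so it is valid even when $\xi_6(a)<1/8$.

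It also makes your planned identification of $\mathcal D_n(a)$ unnecessary. Applied to the unnormalized $\rho$, it reads $2N\rho_1+\binom n2N\rho_4+10\beta_a\rho_6=z(\xi_6(a))\,\mathcal D_n(a)$. The left side is positive for $\gamma\ge0$, and $z(\xi_6(a))=Nw_n(a)>0$ by Lemma~\ref{lem:primalfamily}, so $\mathcal D_n(a)>0$. The paper's computation is self-contained and exhibits $\mathcal D_n(a)$ as $E(\cdots)+\gamma(\cdots)$ with visibly positive brackets. Yours trades that algebra for a dependence on Lemma~\ref{lem:primalfamily}. (If you do identify $\mathcal D_n(a)$ directly, the multiplier is $n$.)

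There are three small slips.
\begin{enumerate}
\item For $m>M$ in the $\rho$ column, the $m^2$-coefficient is $-\lambda_4/2+4E=-(4E+5\gamma)$, not $-\lambda_4/2-4E$. Concavity is unaffected.
\item The value at $n-4$ is not determined by the tight value at $n-3$ alone. Write $Q(m)$ for the slack without the $\alpha_m\lambda_3$ term. Then $Q(n-3)=\lambda_3$, $Q(n-2)=-4\lambda_3$, and the second difference $-2(4E+5\gamma)$ together give $Q(n-4)=(6n-32)E+(12n-40-6M)\gamma$. Similarly, $Q(M+1)=Ef(n-M-1)+\gamma\bigl(E-5(M+1)\bigr)$, and both coefficients are positive whenever $(M,n-4]$ is nonempty, i.e.\ $n\ge17$.
\item The hypothesis $\gamma\ge0$ is forced by the $[0,M]$ slack $5\gamma m(M-m)$, not by the signs of $\rho_3,\rho_4$, which survive small negative $\gamma$.
\end{enumerate}
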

\begin{proof}
For the first column of \eqref{eq:dualcolumns}, direct substitution
into the dual equality gives $D_L=T+E/8>0$.
The multipliers $\lambda_3,\lambda_4,\lambda_5$ are positive, and
$\lambda_6=0$.
Let $\phi_\ell(m)$ denote the slack at $y_m$ after division by $D_L$.
Then
\[
D_L\phi_\ell(m)=
\begin{cases}
dm(M-m),&0\le m\le M,\\
m\bigl((n-3)(5n-18)-d(m-1)\bigr),&M<m\le n-4,\\
0,&m=n-3,n-2,\\
(n-1)R,&m=n-1.
\end{cases}
\]
For the second range, the expression in parentheses is at least
\[
(n-3)(5n-18)-d(n-5)=2(3n-8)>0.
\]
Thus every dual inequality is satisfied, with equality precisely at
$0,M,n-3,n-2$.
The objective value is
\[
2N\lambda_1+\binom n2N\lambda_4+10\beta_a\lambda_6
=\frac{Nb}{D_L}=NL_n.
\]

Now suppose $a\in\mathcal A_n$, so $\gamma\ge0$.
For the second column, simplification gives
\[
\begin{aligned}
\mathcal D_n(a)
={}&E\bigl(9n^2-28n+40+3a^2n+n\beta_a\bigr)\\
&+\gamma\bigl(10n^2-29n+30-M(5n-12)\bigr)>0.
\end{aligned}
\]
The first bracket is positive, and the second is at least
$25n^2/4-20n+30>0$.
The formulas in \eqref{eq:dualcolumns} therefore give
$\lambda_3,\lambda_4,\lambda_6>0$ and $\lambda_5=0$.

Let $\phi_\rho(m)$ denote the slack at $y_m$ after division by
$\mathcal D_n(a)$.
For $0\le m\le M$, direct substitution gives
\[
\mathcal D_n(a)\phi_\rho(m)
=5\gamma m(M-m)\ge0,
\]
where the inequality follows from $\gamma\ge0$.
The restriction \eqref{eq:admissible_set} follows naturally
from this calculation.

To check the remaining inequalities, put
\[
\varepsilon=3n-4M\in\{0,1,2,3\},
\qquad
E=\frac{5R+d\varepsilon}{4}.
\]
For $M<m\le n-4$, write $j=n-m$, so $4\le j<n/4$, and define
\[
f(j)=-4j^2+5nj-14n+32.
\]
The concave quadratic $f$ is positive on $[4,n/4]$, since
\[
f(4)=6n-32>0,\qquad f(n/4)=R>0.
\]
The dual slack has the factorization
\[
\mathcal D_n(a)\phi_\rho(m)
=Ef(j)+\frac{\gamma}{4}
  \bigl(5f(j)+\varepsilon(5j-14)\bigr)>0.
\]
The constraints at $n-3$ and $n-2$ are tight by construction, while
\[
\mathcal D_n(a)\phi_\rho(n-1)
=ER+\frac{\gamma}{4}
  \bigl(5R+\varepsilon(n-10)\bigr)>0.
\]
This proves feasibility. At $\gamma=0$, all the constraints indexed
by $0\le m\le M$ are tight.

Finally, substitution from \eqref{eq:sigmadef} and \eqref{eq:dualcolumns} gives
\[
\mathcal D_n(a) =n\bigl(T\sigma_1(a)-E\sigma_0(a)\bigr),\qquad
2\rho_1+\binom n2\rho_4+\frac{10\beta_a}{N}\rho_6 =nb\sigma_1(a).
\]
Their quotient is $w_n(a)$, so the dual objective is $Nw_n(a)$.
\end{proof}

\begin{theorem}\label{thm:LPoptimal}
For every $n\ge13$ and $a\in\mathcal A_n$,
\begin{equation}
\label{eq:LPvalue}
S_n\le\OPT(\mathrm P_n(a))=
N\min\{L_n,w_n(a)\}.
\end{equation}
An optimal primal solution is given by \eqref{eq:primalfamily} with $\xi=\xi^*(a)$.
\end{theorem}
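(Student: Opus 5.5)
The plan is to squeeze $\OPT(\mathrm P_n(a))$ between matching primal and dual values using weak LP duality. The first inequality $S_n\le\OPT(\mathrm P_n(a))$ is already \eqref{eq:parametricfeasible}, valid for every $a>8$ and in particular on $\mathcal A_n\subseteq(8,\infty)$. It remains to show $\OPT(\mathrm P_n(a))=N\min\{L_n,w_n(a)\}$.

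\emph{Lower bound.} By Lemma~\ref{lem:primalfamily}, the point \eqref{eq:primalfamily} with $\xi=\xi^*(a)$ is feasible for $(\mathrm P_n(a))$ and has objective value $N\min\{L_n,w_n(a)\}$. Hence $\OPT(\mathrm P_n(a))\ge N\min\{L_n,w_n(a)\}$.

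\emph{Upper bound.} Since $a\in\mathcal A_n$, Lemma~\ref{lem:dual_solutions} supplies two feasible points of $(\mathrm D_n(a))$: $\ell/D_L$ with objective $NL_n$, and $\rho/\mathcal D_n(a)$ with objective $Nw_n(a)$. The pairing of $(\mathrm P_n(a))$ with $(\mathrm D_n(a))$ is standard: equality rows (C1), (C2) have free multipliers; the $\le$ rows (C3), (C4) and the $\ge$ rows (C5), (C6$_a$) have sign-constrained multipliers entering with the signs displayed in the dual; and the free variable $z$ produces the dual equality. One should check once that the signs make the standard inequality $z\le 2N\lambda_1+\binom n2N\lambda_4+10\beta_a\lambda_6$ hold. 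To do this, multiply each primal row by its multiplier and sum. The coefficient of $z$ is $1$ by the dual equality. Each $y_m$ gets coefficient equal to minus the dual slack, which is $\le0$, and since $y_m\ge0$ those terms can only help. Applying this with both dual vectors gives $\OPT(\mathrm P_n(a))\le N\min\{L_n,w_n(a)\}$, so equality holds and the primal point of Lemma~\ref{lem:primalfamily} is optimal.

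The only care point is the sign bookkeeping in the weak-duality step, notably that (C5) and (C6$_a$) are $\ge$ rows entering with negative coefficients $-m\ind_{\{m\le M\}}\lambda_5$ and $-r_m(a)\lambda_6$. For (C6$_a$) the constant $-10\beta_a$ moves to the right side as $+10\beta_a\lambda_6$; expanding that row gives $\beta_a z-\sum_m r_m(a)y_m\le10\beta_a$, consistent with the dual objective. As a consistency check, complementary slackness also holds: the primal support $\{0,M,n-3,n-2\}$ matches the tight dual rows. Also, whichever of (C5) or (C6$_a$) is slack carries a zero multiplier in the corresponding dual vector ($\lambda_6=0$ in $\ell$, $\lambda_5=0$ in $\rho$). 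This is not needed for the argument, since equality of the objective values already certifies optimality.
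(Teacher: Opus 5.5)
Your proposal is correct and follows essentially the same route as the paper: weak duality applied to the two dual vectors of Lemma~\ref{lem:dual_solutions} gives $\OPT(\mathrm P_n(a))\le N\min\{L_n,w_n(a)\}$, the feasible point of Lemma~\ref{lem:primalfamily} attains this value, and \eqref{eq:parametricfeasible} gives $S_n\le\OPT(\mathrm P_n(a))$. Your explicit sign check of the weak-duality step for the $\ge$ rows (C5) and (C6$_a$) is consistent with the dual as written; the paper simply invokes weak duality without spelling this out.
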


\begin{proof}
Lemma~\ref{lem:dual_solutions} and weak duality give
\[
\OPT(\mathrm P_n(a))\le N\min\{L_n,w_n(a)\}.
\]
Lemma~\ref{lem:primalfamily} supplies a feasible primal point with
this objective value, proving equality. The bound for $S_n$ follows
from \eqref{eq:parametricfeasible}.
\end{proof}

\subsection{Comparing \texorpdfstring{$L_n$ and $w_n(a)$}{Ln and wn(a)}}
\label{subsec:parameter}
We would like to identify the threshold dimension $n$ for which $w_n(a)$ becomes a tighter upper bound on $S_n$ than $L_n$ for some choice of parameter $a$.
To do so, we consider the infimum of $w_n(a)$ over all admissible values of $a$.
Define
\[
\widehat w_n=\inf_{a\in\mathcal A_n}w_n(a).
\]
The relation between the two expressions in \eqref{eq:LPvalue}
can be read directly from \eqref{eq:xislack}:
\begin{equation}
\label{eq:slackcomparison}
w_n(a)\ge L_n
\quad\Longleftrightarrow\quad
\sigma_0(a)+\frac{\sigma_1(a)}8\ge0
\quad\Longleftrightarrow\quad
s_{6a}(1/8)\ge0.
\end{equation}
Thus the comparison tests whether (C6$_a$) is satisfied at the
point where (C5) is tight.

\begin{corollary}\label{cor:LPbound}
For every $n\ge13$,
\begin{equation}
\label{eq:Sn_envelope}
S_n\le N\min\{L_n,\widehat w_n\}.
\end{equation}
Moreover,
\begin{equation}
\label{eq:crossover}
\widehat w_n\ge L_n\quad(13\le n\le780),
\qquad
\widehat w_n<L_n\quad(n\ge781).
\end{equation}
For $n\ge781$, the explicit admissible choice $a=2n/21$ satisfies
$w_n(a)<L_n$.
\end{corollary}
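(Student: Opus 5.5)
The envelope \eqref{eq:Sn_envelope} comes straight from Theorem~\ref{thm:LPoptimal}. For every $a\in\mathcal A_n$ we have $S_n\le N\min\{L_n,w_n(a)\}$. Taking the infimum over $a\in\mathcal A_n$ and using $\inf_a\min\{L_n,w_n(a)\}=\min\{L_n,\widehat w_n\}$ gives the claim. The set $\mathcal A_n$ is nonempty, since it contains every $a>\max\{8,\sqrt{R/(3n)}\}$, so $\widehat w_n$ is finite or at worst the bound is trivially $NL_n$. If the infimum is not attained, the strict-inequality version still holds because $S_n$ is bounded by every $Nw_n(a)$.

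For the crossover \eqref{eq:crossover}, I will use the equivalence \eqref{eq:slackcomparison}. Here $w_n(a)\ge L_n$ if and only if $g_n(a):=\sigma_0(a)+\sigma_1(a)/8\ge0$, so $\widehat w_n\ge L_n$ exactly when $g_n(a)\ge 0$ for all $a\in\mathcal A_n$. From \eqref{eq:sigmadef}, $g_n$ is a quadratic in $a$ plus terms linear in $\beta_a=(a-8)/3$. Explicitly,
\[
g_n(a)=A_n a^2-\tfrac13\Bigl(1-\tfrac{10(T+E/8)}{Nb}\Bigr)a+C_n,
\]
with
\[
A_n=\frac{6(n^2-9n+12)}{b}+\frac38\Bigl(\frac5d+\frac{2E}{b}\Bigr)>0 .
\]
Since $A_n=\Theta(1/n)$ and the linear coefficient is about $-1/3$, the minimizer is at $a^\ast\approx 1/(6A_n)$, which is of order $n$. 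This minimizer lies in $\mathcal A_n$ for $n\ge13$, because $3(a^\ast)^2n\gg R$ and $a^\ast>8$; if not, the boundary point would be checked instead. The minimum value is $C_n-\frac{(\text{lin.\ coeff})^2}{4A_n}$. The constant $C_n$ comes from $c_0-\beta_a|_{a=0}$ and $c_1/8$, and tends to a constant: $c_0\to 48/5$ and $c_1\to 6-6+1=1$ roughly, plus $8/3$. The subtracted term grows linearly in $n$. So $\min_a g_n$ is decreasing in $n$ asymptotically and changes sign once. This reduces the crossover to the sign of one explicit expression $\Phi(n)=4A_nC_n-\tfrac19(1-\ldots)^2$. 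The $N^{-1}$ terms are exponentially small and can be bounded crudely for $n\ge 781$.

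The main obstacle is certifying the exact threshold 780/781, including the effect of $M=\lfloor 3n/4\rfloor$, which depends on $n\bmod 4$ through $\varepsilon$. I would first clear denominators, writing $E=(5R+d\varepsilon)/4$, so that for each residue class $\varepsilon\in\{0,1,2,3\}$ the sign of $\Phi(n)$ (ignoring $1/N$ terms) is the sign of a polynomial in $n$. Next I would locate its real roots and show the sign change lies between 780 and 781 for each class, and that no other roots exist for $n\ge13$. The $10\beta_a T/(Nb)$ and $10\beta_aE/(Nb)$ corrections shift the linear coefficient by $O(2^{-n})$, which cannot move a root crossing an integer margin at $n\approx780$. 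For small $n$, a direct evaluation may be cleaner than the asymptotic argument, since the estimates there are not uniform.

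Finally, for $n\ge781$ I would substitute $a=2n/21$ into $g_n$ and show $g_n(2n/21)<0$, again as a polynomial inequality per residue class. Admissibility is immediate: $a>8$ holds, and $3a^2n=12n^3/441\ge R$. This gives $w_n(2n/21)<L_n$ and hence $\widehat w_n<L_n$. The choice $2n/21$ is presumably close to $a^\ast\approx 1/(6A_n)$: with $A_n\sim(6/5+3/8+\ldots)/n$, this gives $a^\ast\sim n/c$ for a constant $c$ around $10$, consistent with $21/2$.
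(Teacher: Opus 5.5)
Your plan is correct and follows the paper's route. Appendix~\ref{app:certificatecrossover} works with $\Psi_n(a)=24b\,g_n(a)$. For $13\le n\le780$ it shows that a lower-bounding quadratic in $a$ has negative discriminant, so positivity holds for all real $a$. This makes your remark that $a^\ast\in\mathcal A_n$ unnecessary, and that remark is in fact false for small $n$, since $a^\ast\approx n/10.6<8$ when $n<85$. For $n\ge781$ the paper evaluates at $a=2n/21$, as you do.

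The only real difference is how $M=\lfloor 3n/4\rfloor$ is handled. You use four residue-class polynomials. The paper instead observes that every term of $\Psi_n$ decreases in $M$, except the exponentially small term, which is positive for $a>8$. It then substitutes $M=3n/4$ for the lower range and $M=3(n-1)/4$ at $a=2n/21$, so a single polynomial certifies each direction.
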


\begin{proof}
Taking the infimum over $a\in\mathcal A_n$ in
Theorem~\ref{thm:LPoptimal} proves \eqref{eq:Sn_envelope}.
Appendix~\ref{app:certificatecrossover} proves \eqref{eq:crossover}
by determining the sign in \eqref{eq:slackcomparison}.
\end{proof}

The bound \eqref{eq:Sn_envelope} is therefore $NL_n$ through dimension $780$ and is strictly smaller from dimension $781$ onward.

\section{Comparison with existing upper bounds}
\label{sec:comparison}

We now compare \eqref{eq:Sn_envelope} with the three upper bounds $U_L(n),U_{LvZ}(n),U_Z(n)$ defined in Section~\ref{sec:intro}.
We first consider $NL_n$, and then analyze $w_n(a)$ using a particular
admissible (but not necessarily optimal) choice of parameter $a$.

\subsection{The bound \texorpdfstring{$NL_n$}{NLn}}

\begin{lemma}\label{lem:lvzcomparison}
For every $n\ge13$,
\[
NL_n<\min\{U_L(n),U_{LvZ}(n)\}.
\]
In addition, $NL_n<U_Z(n)$ for $13\le n\le19853$.
\end{lemma}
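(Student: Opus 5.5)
We compare the explicit rational function $L_n$ with each of the three earlier bounds. In each case we clear denominators, which are positive for $n\ge13$, and reduce the comparison to a polynomial (or, for $U_Z$, algebraic) inequality in $n$. One complication is the floor $M=\lfloor 3n/4\rfloor$. We handle it through $\varepsilon=3n-4M\in\{0,1,2,3\}$, writing $M=(3n-\varepsilon)/4$. Since the denominator of $L_n$ is decreasing in $M$, $L_n$ is increasing in $M$. Hence $L_n\le L_n^{+}$, where $L_n^{+}$ denotes $L_n$ with $M$ replaced by $3n/4$. It therefore suffices to prove every upper comparison for $L_n^{+}$, which is a pure rational function:
\[
L_n^{+}=\frac{32n(n-1)(5n-14)}{160n^3-524n^2+16n+544-3n(5n-14)}.
\]
If some inequality turns out too tight near $n=13$, we fall back to the four residue classes of $n$ modulo $4$ for small $n$.

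\textbf{Comparison with $U_L$.} The target $L_n^{+}<1-1/(4n-9)=(4n-10)/(4n-9)$ becomes
\[
32n(n-1)(5n-14)(4n-9)<(4n-10)\bigl(145n^3-482n^2+16n+544\bigr).
\]
This is a quartic inequality. We expand the difference, shift to $k=n-13$, and check that all coefficients are nonnegative with a positive constant term. The leading coefficients are $640$ and $580$, so the right side loses at leading order. The cross-check must therefore be done carefully, and the exact form with the floor may be needed, since $L_n$ versus $U_L$ is likely close. If the inequality fails for $L_n^{+}$, we use the exact $E$ and note that $E\ge \tfrac54 R$.

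\textbf{Comparison with $U_{LvZ}$.} Note $U_{LvZ}(n)\ge N\bigl(1-\tfrac{2n-20}{5n^2-3n-20}\bigr)$. It suffices to show
\[
1-L_n>\frac{2n-20}{5n^2-3n-20}.
\]
Here
\[
1-L_n=\frac{T-b+E/8}{T+E/8}=\frac{2(n-2)(n-3)+E/8}{T+E/8}.
\]
Using $E\ge \tfrac54R$ in the numerator and $E\le 5n^2$ in the denominator gives a polynomial inequality of degree $4$. Its leading behaviour is $\tfrac{2}{5n}$ versus $\tfrac{2}{5n}$, so the next-order terms decide. We expect $1-L_n\approx \tfrac{2}{5n}+\tfrac{c}{n^2}$ with a larger $c$, and we will verify this by exact expansion in $k=n-13$.

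\textbf{Comparison with $U_Z$.} This is the main obstacle, because the range $13\le n\le19853$ is finite and the inequality fails beyond it. We rewrite $U_Z(n)>NL_n$ as
\[
6n\,(T+E/8)+N^{-1}(\cdots)>b\bigl(6n+\sqrt n/(6\sqrt6)-7\bigr),
\]
and drop the positive $+1$ term. This reduces to
\[
\frac{\sqrt n}{6\sqrt6}<\frac{6n(T+E/8)}{b}-6n+7.
\]
The right side is a rational function in $n$ that is asymptotic to $6n\cdot\tfrac{2}{5n}\cdot\tfrac{5}{4}\cdots$, that is, bounded near a constant of order $3$. Squaring then gives a polynomial inequality in $n$. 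We will locate its largest real root numerically, just above $19853$, and certify it by an interval or sign-change check. We then bound the error caused by the floor by treating each $\varepsilon$ separately.
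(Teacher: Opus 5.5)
Your overall strategy (replace $M$ by $3n/4$, which is legitimate since $L_n$ increases with $M$, and reduce each comparison to a polynomial inequality) is the same as the paper's. The paper is even cruder in the first two comparisons: it uses $L_n<b/T$ there.

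\textbf{The gap is in the $U_L$ step.} Your displayed quartic uses $145n^3-482n^2+16n+544$, which comes from expanding $3n(5n-14)$ as $15n^3-42n^2$. The correct denominator of $L_n^{+}$ is $160n^3-539n^2+58n+544$. With your polynomial the left side has leading term $640n^4$ and the right side $580n^4$, so the inequality is false for large $n$. Neither of your remedies can repair this:
\begin{itemize}
\item $E\ge\frac54R$ is exactly the substitution that produced $L_n^{+}$, so it gives nothing new.
\item The exact floor only increases $E$ by $d\varepsilon/4=O(n)$, which is invisible against an $n^4$ discrepancy.
\item Splitting into residue classes for small $n$ is irrelevant to a large-$n$ failure.
\end{itemize}
A genuine leading-order loss would mean the lemma itself is false, which should have signalled an arithmetic slip. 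With the correct denominator the $n^4$ terms cancel, and right side minus left side equals
\[
116n^3-1642n^2+5628n-5440=116k^3+2882k^2+21748k+45078,\qquad k=n-13,
\]
so the step goes through. Alternatively, follow the paper: $L_n<b/T$ and $(4n-10)T-(4n-9)b=(n-13)(3n^2-2n+108)+1284>0$.

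\textbf{Two further points need correcting.}

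In the $U_{LvZ}$ comparison the leading behaviours are not equal. Keeping $E/8\ge5R/32$ gives $1-L_n\sim\frac{69}{160n}$, against $\frac{2}{5n}=\frac{64}{160n}$. So your polynomial inequality has a positive $n^4$ coefficient and holds easily; the error is harmless. The tie at order $1/n$ occurs only when $E$ is dropped, as in the paper, where $5T-(5n-14)(5n^2-3n-20)=78n-220$ decides.

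In the $U_Z$ comparison your right-hand side tends to $7+\frac{6\cdot69}{160}\approx9.59$, not to a constant near $3$. A limit of $3$ would put the crossover near $n\approx1944$, contradicting the lemma. Also, locating the largest real root of the squared degree-$7$ polynomial near $19853$ does not by itself show positivity on all of $[13,19853]$; you must also exclude other roots. The paper avoids this in two stages:
\begin{itemize}
\item For $n\le10584$, i.e.\ $\sqrt{n/6}\le42$, it notes $U_Z(n)>N>NL_n$.
\item For $10585\le n\le19853$, it uses the lower bound $\frac{2301}{40}-\frac{29}{n}$ for $42+36n(1/L_n-1)$, together with the monotonicity of $\frac{2301}{40}-\frac{29}{x}-\sqrt{x/6}$.
\end{itemize}
No case split on $\varepsilon$ is needed: $M\le3n/4$ alone suffices up to $n=19853$.
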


\begin{proof}
Since $E>0$ and $T-b=2(n-2)(n-3)>0$, we have
\[
L_n=\frac{b}{T+E/8}<\frac bT<1.
\]
For the first two comparisons, it is enough to use the larger
quantity $b/T$.
For comparison with \eqref{eq:Lukito},
\[
(4n-10)T-(4n-9)b
=(n-13)(3n^2-2n+108)+1284>0.
\]
Thus
\[
L_n<\frac bT<\frac{4n-10}{4n-9}=\frac{U_L(n)}N.
\]
For comparison with \eqref{eq:LvZ}, the additive term is positive,
and
\[
5T-(5n-14)(5n^2-3n-20)=78n-220>0.
\]
Consequently,
\[
L_n<\frac bT
<\frac{5n(n-1)}{5n^2-3n-20}
<\frac{U_{LvZ}(n)}N.
\]

For comparison with $U_Z(n)$, first observe that $U_Z(n)>N$
when $n\le10584$, since $\sqrt{n/6}\le42$.
Thus $L_n<1$ suffices in this range.
For the remaining dimensions it is enough to prove
\[
L_n<\frac{6n}{6n+\sqrt{n/6}/6-7},
\]
or, equivalently,
\[
\sqrt{\frac n6}
<
42+36n\left(\frac1{L_n}-1\right).
\]
Using $M\le3n/4$ in \eqref{eq:Ln} gives
\[
\begin{aligned}
42+36n\left(\frac1{L_n}-1\right)
&\ge
\frac{2301}{40}
-\frac{22}{5(5n-14)}
-\frac{223}{8(n-1)}\\
&>
\frac{2301}{40}-\frac{29}{n}.
\end{aligned}
\]
Here the two subtracted terms are less than $1/n$ and $28/n$,
respectively, for $n\ge10585$.
The function
\[
h(x)=\frac{2301}{40}-\frac{29}{x}-\sqrt{\frac x6}
\]
is strictly decreasing for $x\ge10585$, since
\[
h'(x)=\frac{29}{x^2}-\frac1{2\sqrt{6x}}<0.
\]
At the upper endpoint,
\[
\frac{2301}{40}-\frac{29}{19853}
>
\frac{57523}{1000}
>
\sqrt{\frac{19853}{6}},
\]
where the last inequality follows from
\[
6(57523)^2-19853(1000)^2=373174>0.
\]
Hence $h(n)>0$ for $10585\le n\le19853$, completing the proof.
\end{proof}

Since $M=3n/4+O(1)$, the definitions in \eqref{eq:LPconstants}
give $E=5n^2/4+O(n)$ and $T=5n^3+O(n^2)$. Thus
\[
1-L_n
=\frac{2(n-2)(n-3)+E/8}{T+E/8}
=\frac{69}{160n}+O(n^{-2}).
\]
By comparison, \eqref{eq:LvZ} gives
\[
\frac{U_{LvZ}(n)}N=1-\frac{2}{5n}+O(n^{-2}).
\]
The coefficient of the $1/n$ correction therefore improves from
$2/5=64/160$ to $69/160$.

\subsection{The bound \texorpdfstring{$Nw_n(a)$}{Nwn(a)}}

To compare $N\widehat w_n$ with $U_Z(n)$, it suffices to select
one admissible parameter.
At the boundary $3a^2n=R$ of \eqref{eq:admissible_set}, we have
$\gamma=0$, and every coordinate of $\rho$ in
\eqref{eq:dualcolumns} has the common factor $E$.
Its cancellation removes the dependence on $M$ from the objective.
For $n\ge206$, put
\begin{equation}
\label{eq:boundaryparameter}
a_n=\sqrt{\frac{R}{3n}}.
\end{equation}
Since
\[
a_n^2-64=\frac{n(n-206)+32}{3n}>0,
\]
we have $a_n\in\mathcal A_n$.
Substitution into \eqref{eq:dualcolumns} gives
\begin{equation}
\label{eq:boundarywn}
w_n(a_n)
=\frac{30n^2-108n+192+10n(a_n-8)/N}
       {30n^2-134n+216+na_n}.
\end{equation}

\begin{theorem}\label{thm:asymptotic}
For every $n\ge206$,
\begin{equation}
\label{eq:boundarycomparison}
Nw_n(a_n)<U_Z(n).
\end{equation}
Moreover, as $n\to\infty$,
\begin{equation}
\label{eq:Sn_asymp}
S_n\le
N\left(1-\frac1{30\sqrt{3n}}+O(n^{-1})\right).
\end{equation}
\end{theorem}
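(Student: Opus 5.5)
The plan is to work directly from the closed form \eqref{eq:boundarywn}. Theorem~\ref{thm:LPoptimal} applies with $a=a_n\in\mathcal A_n$, so $S_n\le Nw_n(a_n)$. Both claims then reduce to estimating the rational expression
\[
w_n(a_n)=\frac{30n^2-108n+192+10n(a_n-8)/N}{30n^2-134n+216+na_n}
\]
against $U_Z(n)/N$.

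\textbf{The asymptotic bound.} For \eqref{eq:Sn_asymp}, write the denominator minus the numerator:
\[
na_n-26n+24-10n(a_n-8)/N .
\]
Since $a_n=\sqrt{R/(3n)}=\sqrt{n/3}\,(1+O(1/n))$, we have $na_n=n^{3/2}/\sqrt3+O(n^{1/2})$. The term $10n(a_n-8)/N$ is exponentially small. Hence
\[
1-w_n(a_n)=\frac{n^{3/2}/\sqrt3+O(n)}{30n^2+O(n^{3/2})}
=\frac{1}{30\sqrt{3n}}+O(n^{-1}),
\]
which gives \eqref{eq:Sn_asymp}.

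\textbf{The finite comparison \eqref{eq:boundarycomparison}.} This is where the real work lies. I would split the range as in Lemma~\ref{lem:lvzcomparison}.

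\emph{Range $206\le n\le10584$.} Here $U_Z(n)>N$. It suffices to show $w_n(a_n)<1$, that is,
\[
na_n-26n+24>10n(a_n-8)/N .
\]
This needs $a_n>26$ roughly, i.e.\ $R>2028n$, i.e.\ $n\gtrsim 2042$. So for smaller $n$ the inequality $w_n(a_n)<1$ actually fails. In that subrange I instead need the sharper observation that $U_Z(n)=1+N\cdot(\ldots)$ exceeds $N$ by a margin. Precisely, $U_Z(n)/N=6n/(6n+\sqrt{n/6}/6-7)$, and this exceeds $1$ when $\sqrt{n/6}<42$. Then one compares the two fractions directly by cross-multiplication:
\[
w_n(a_n)\bigl(6n+\sqrt{n}/(6\sqrt6)-7\bigr)<6n .
\]
Ignoring the exponentially small term (bounded crudely by $10n a_n/N<10^{-50}$, say), this becomes a polynomial inequality in $n$, $\sqrt n$, and $a_n$. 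Clearing denominators gives
\[
(30n^2-108n+192)\bigl(6n+\tfrac{\sqrt n}{6\sqrt6}-7\bigr)<6n\,(30n^2-134n+216+na_n).
\]
The leading terms cancel. What remains is
\[
6n^2a_n\ \gtrsim\ \bigl(-648+804\bigr)n^2+\frac{30n^{5/2}}{6\sqrt6}-210n^2+\cdots,
\]
i.e.\ approximately
\[
6a_n>\frac{5\sqrt n}{\sqrt6}-54+O(1/n).
\]
Since $6a_n\approx 6\sqrt{n/3}=2\sqrt{3n}\approx3.46\sqrt n$ while $5/\sqrt6\approx2.04$, the inequality holds comfortably for all large $n$. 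This is exactly the source of the improvement factor $6\sqrt2/5$.

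\emph{Execution.} So my actual plan is uniform in $n$: prove the cross-multiplied inequality for all $n\ge206$ at once, with no case split. I would make it rigorous as follows.
\begin{itemize}
\item Use the lower bound $a_n\ge\sqrt{n/3}-14/\sqrt{3n}$, which follows from $R\ge n^2-14n$ and $\sqrt{1-x}\ge1-x$.
\item Collect terms into the form $c_1n^{5/2}-c_2n^2-c_3n^{3/2}-\cdots>0$ with explicit positive $c_1=2\sqrt3-5/\sqrt6$.
\item Divide by $n^{3/2}$ and check monotonicity, as in the $h(x)$ argument of Lemma~\ref{lem:lvzcomparison}.
\item Verify the inequality numerically at $n=206$.
\end{itemize}

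\textbf{Main obstacle.} I expect the bookkeeping of lower-order terms near $n=206$ to be the main difficulty, including the sign of $a_n-8$ in the exponentially small term, which is positive and hurts us but only negligibly. If the margin there is thin, I would instead verify the finitely many small cases exactly with rational bounds on the square roots, and use the monotone argument only beyond some $n_0$.
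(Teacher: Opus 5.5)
Your plan is correct and is essentially the paper's proof: the asymptotic expansion is the same, and the finite comparison reduces, exactly as in the paper, to positivity of $6n^2a_n+54n^2-612n+1344-Bh$, where $B=30n^2-108n+192$ and $h=\sqrt n/(6\sqrt6)$. The paper just uses cruder estimates ($a_n>\sqrt n/2$ and $6\sqrt6>10$, so $6n^2a_n>3n^{5/2}>Bh$, with no monotonicity or numerical check at $n=206$); it handles the exponentially small term by noting that the denominator of $w_n(a_n)$ exceeds $10n(a_n-8)$, so the term is below the $+1/N$ that $U_Z(n)/N$ actually contains and that you dropped, although your absorption of it into the polynomial margin after cross-multiplying also works.
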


\begin{proof}
Write
\[
B=30n^2-108n+192,\qquad
D=30n^2-134n+216+na_n,\qquad
h=\frac{\sqrt n}{6\sqrt6}.
\]
All denominators below are positive.
The term involving $1/N$ in \eqref{eq:boundarywn} is smaller
than the corresponding term in $U_Z(n)/N$.
Indeed, $a_n<n$, and hence
\[
D-10n(a_n-8)
=30n^2-54n+216-9na_n
>21n^2-54n+216>0.
\]
Therefore
\[
w_n(a_n)<\frac BD+\frac1N,
\qquad
\frac{U_Z(n)}N=\frac{6n}{6n+h-7}+\frac1N.
\]
It remains to compare the two fractions.
Cross-multiplication gives
\[
6nD-B(6n+h-7)
=6n^2a_n+54n^2-612n+1344-Bh.
\]
Since
\[
a_n^2-\frac n4=\frac{n(n-56)+128}{12n}>0,
\]
we have $a_n>\sqrt n/2$.
Also $B<30n^2$ and $6\sqrt6>10$, so
\[
Bh<3n^2\sqrt n<6n^2a_n.
\]
Finally,
\[
54n^2-612n+1344=54n(n-12)+36n+1344>0.
\]
Thus $B/D<6n/(6n+h-7)$, proving \eqref{eq:boundarycomparison}.

For the asymptotic statement, observe that
\[
\frac{a_n}{n}
=\frac1{\sqrt{3n}}
 \left(1-\frac{14}{n}+\frac{32}{n^2}\right)^{1/2}
=\frac1{\sqrt{3n}}+O(n^{-3/2}).
\]
Dividing the numerator and denominator of \eqref{eq:boundarywn}
by $30n^2$, and noting that the term involving $1/N$ is
exponentially small, gives
\[
w_n(a_n)
=\frac{1+O(n^{-1})}
 {1+1/(30\sqrt{3n})+O(n^{-1})}
=1-\frac1{30\sqrt{3n}}+O(n^{-1}).
\]
Since $S_n\le N\widehat w_n\le Nw_n(a_n)$ by
\eqref{eq:Sn_envelope}, this proves \eqref{eq:Sn_asymp}.
\end{proof}

The choice of $a_n$ in \eqref{eq:boundaryparameter} can also be motivated by the leading asymptotic behavior of $w_n(a)$.
Since the second dual solution requires $3a^2n\ge R=n^2-14n+32$, its smallest admissible parameter satisfies $a_n\sim\sqrt{n/3}$.
This suggests considering parameters of the form $a=t\sqrt n$, where $t>0$ is fixed.
The condition $3a^2n\ge R$ then becomes
\[
(3t^2-1)n^2+14n-32\ge0,
\]
so such a parameter is admissible for all sufficiently large $n$ precisely when $t\ge1/\sqrt3$.

Substitution into \eqref{eq:dualcolumns} gives
\begin{equation}
\label{eq:wasymptotic_t}
w_n(t\sqrt n)
=1-\frac{t}{6(9t^2+2)\sqrt n}+O(n^{-1}).
\end{equation}
To obtain the strongest leading correction in \eqref{eq:wasymptotic_t},
we seek to maximize $t/(6(9t^2+2))$ over $t\ge1/\sqrt3$.
But
\[
\frac{d}{dt}\frac{t}{6(9t^2+2)}
=\frac{2-9t^2}{6(9t^2+2)^2}<0
\qquad (t\ge1/\sqrt3).
\]
Thus the maximum occurs at $t=1/\sqrt3$, yielding the coefficient $1/(30\sqrt3)$ in \eqref{eq:Sn_asymp}.
The exact boundary choice $a=a_n$ has this same leading behavior and, as observed above, also simplifies the finite-dimensional expression for $w_n(a)$.

Expanding the finite expression in \eqref{eq:Zemor} gives
\[
\frac{U_Z(n)}N
=1-\frac1{36\sqrt{6n}}+O(n^{-1}).
\]
The coefficient of the leading correction in \eqref{eq:Sn_asymp} therefore improves by the factor
\[
\frac{1/(30\sqrt3)}{1/(36\sqrt6)}
=\frac{6\sqrt2}{5}.
\]

\begin{corollary}\label{cor:globalcomparison}
For every $n\ge13$,
\[
S_n\le N\min\{L_n,\widehat w_n\}
<\min\{U_L(n),U_{LvZ}(n),U_Z(n)\}.
\]
\end{corollary}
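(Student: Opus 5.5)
The first inequality is Corollary~\ref{cor:LPbound}, equation \eqref{eq:Sn_envelope}. So the work is the strict inequality
\[
N\min\{L_n,\widehat w_n\}<\min\{U_L(n),U_{LvZ}(n),U_Z(n)\}.
\]
Since the left side is at most both $NL_n$ and $N\widehat w_n\le Nw_n(a)$ for any admissible $a$, it suffices to show that, for each $n\ge13$ and each of the three bounds $U$, at least one of $NL_n$ or $Nw_n(a)$ (for a suitable $a\in\mathcal A_n$) lies strictly below $U$.

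The comparisons with $U_L$ and $U_{LvZ}$ are handled uniformly. Lemma~\ref{lem:lvzcomparison} gives $NL_n<\min\{U_L(n),U_{LvZ}(n)\}$ for every $n\ge13$. Hence
\[
N\min\{L_n,\widehat w_n\}\le NL_n<\min\{U_L(n),U_{LvZ}(n)\}.
\]

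For $U_Z$ I split into two ranges of $n$.
\begin{itemize}
\item For $13\le n\le19853$, Lemma~\ref{lem:lvzcomparison} again gives $NL_n<U_Z(n)$.
\item For $n\ge206$, and in particular for all $n>19853$, Theorem~\ref{thm:asymptotic} provides the admissible parameter $a_n=\sqrt{R/(3n)}\in\mathcal A_n$ with $Nw_n(a_n)<U_Z(n)$. Then
\[
N\widehat w_n\le Nw_n(a_n)<U_Z(n).
\]
\end{itemize}
The two ranges overlap, so every $n\ge13$ is covered. In each case the minimum on the left is strictly below $U_Z(n)$.

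Combining the three comparisons yields the strict inequality against the minimum of all three bounds. There is no real obstacle here. The only point needing care is that $\widehat w_n$ is an infimum, so its strict comparison must go through the explicit witness $a_n$ rather than be asserted directly. Membership $a_n\in\mathcal A_n$ for $n\ge206$ was already checked in Theorem~\ref{thm:asymptotic}.
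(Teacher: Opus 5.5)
Your proposal is correct and matches the paper's proof: the first inequality comes from Corollary~\ref{cor:LPbound}, the $U_L$ and $U_{LvZ}$ comparisons come from Lemma~\ref{lem:lvzcomparison}, and the $U_Z$ comparison is split into the overlapping ranges $13\le n\le19853$ (via $NL_n$) and $n\ge206$ (via $N\widehat w_n\le Nw_n(a_n)$ from Theorem~\ref{thm:asymptotic}). Your remark that strictness must go through the explicit witness $a_n$, since $\widehat w_n$ is an infimum, is also how the paper handles it.
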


\begin{proof}
Corollary~\ref{cor:LPbound} gives the first inequality.
Lemma~\ref{lem:lvzcomparison} proves the comparisons with
$U_L(n)$ and $U_{LvZ}(n)$ for every $n\ge13$, and with
$U_Z(n)$ for $13\le n\le19853$.
For $n\ge206$, Theorem~\ref{thm:asymptotic} gives
$N\widehat w_n\le Nw_n(a_n)<U_Z(n)$.
These ranges cover every $n\ge13$.
\end{proof}

\section{Concluding remarks}
\label{sec:conclusions}
Our analysis begins by building upon the square-counting results of \cite{EL} and the heavy-segment machinery of \cite{Zemor} to derive new valid combinatorial inequalities for maximum coils.
We then relax these (and other) inequalities into a parametric LP whose optimal value provides an upper bound for $S_n$.
This is in the spirit of linear-programming bounds in coding theory \cite{Delsarte,DGS,MacWilliams,Roth}, although the variables here are degree classes rather than distance distributions.
It also differs from direct combinatorial optimization models such as the QUBO formulation in \cite{Fuidio}, whose variables describe vertices and their embeddings in $Q_n$.

We derive two analytic upper bounds, $S_n\le NL_n$ and $S_n\le N\widehat w_n$, and establish that their combination
$S_n\le N\min\{L_n,\widehat w_n\}$ improves upon the prior upper bounds of \cite{Lukito,LvZ,Zemor} for all $n\ge13$.
The second bound also improves the leading asymptotic correction in \eqref{eq:Zemor}:
\[
S_n\le 2^{n-1}\left(1-\frac1{30\sqrt{3n}}+O(n^{-1})\right).
\]

Optimality in the linear program does not imply realizability as the degree counts of a coil, as the following corollary illustrates.

\begin{corollary}
\label{cor:coil_realizability}
Every maximum coil whose degree counts satisfy $y_0>0$ satisfies (C4)
strictly. In particular, for every $n\ge13$ and $a\in\mathcal A_n$,
the optimal solution constructed in Theorem~\ref{thm:LPoptimal} is not
realizable as the length and degree counts of a coil in $Q_n$.
\end{corollary}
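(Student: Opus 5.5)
The proof reads (C4) directly off the identity \eqref{eq:F5identity} from the proof of Lemma~\ref{lem:Alower}. That identity says
\[
qS_n-\sum_{m}\binom m2 y_m=\binom n2 N-2F_5 ,
\]
so for an actual maximum coil the slack in (C4) is exactly $2F_5$. The first assertion therefore reduces to showing that $y_0>0$ forces $F_5>0$.

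Take $x\in Y_0$. Then $x$ has no neighbor in $S$, so all $n$ of its neighbors lie in $Y$. Choose two of them, $x+e_i$ and $x+e_j$, with $i\ne j$. The square on $x,x+e_i,x+e_j,x+e_i+e_j$ contains at most one vertex of $S$, namely possibly $x+e_i+e_j$.

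We need a square with no vertex of $S$. Consider the $\binom n2$ squares through $x$, one for each pair $\{i,j\}$. Each has fourth vertex $x+e_i+e_j$, which lies at distance $2$ from $x$. A vertex of $S$ at distance $2$ from $x$ has exactly two neighbors in $S$. Those are its cycle neighbors, and a cycle of length $>4$ cannot pass twice through the $2$-cube spanned at $x$ without chords. The simpler count suffices, though. Each vertex $w\in S$ at distance $2$ from $x$ is the fourth vertex of exactly one square through $x$. It is therefore enough to show that fewer than $\binom n2$ of the vertices at distance $2$ from $x$ belong to $S$.

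Suppose all $\binom n2$ vertices at distance $2$ from $x$ were in $S$. Since $n\ge 13$, the vertices $x+e_1+e_2$, $x+e_1+e_3$, $x+e_1+e_4$ all lie in $S$ and are all adjacent to $x+e_1$. Then $x+e_1$ has at least $3$ neighbors in $S$. If $x+e_1\in S$ this is impossible, since a vertex of $S$ has exactly two neighbors in $S$. If $x+e_1\in Y$ it is allowed, so the argument has to find the contradiction elsewhere.

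The contradiction comes from the square $x+e_1+e_2,\ x+e_1+e_2+e_3,\ x+e_2+e_3,\ x+e_1+e_3$. Three of its vertices, $x+e_1+e_2$, $x+e_2+e_3$, $x+e_1+e_3$, would all be in $S$, and they are pairwise at distance $2$. Each of them has $S$-degree $2$ inside the induced cycle, so $x+e_1+e_2$ has exactly two neighbors in $S$, and they lie at distance $3$ from $x$. Counting over all $\binom n2$ such vertices, they form an induced subgraph of the coil. But every vertex at distance $2$ has $n-2$ neighbors at distance $3$, and any vertex at distance $3$, say $x+e_1+e_2+e_3$, is adjacent to three of them, which are all in $S$. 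Any one of these vertices at distance $3$ then has at least $3$ neighbors in $S$, so it cannot be in $S$. Consequently no vertex at distance $3$ lies in $S$. The vertices at distance $2$ then have their $S$-neighbors only at distances $1$ and $3$, and the previous step rules out distance $3$. So their $S$-neighbors lie among the $n$ vertices at distance $1$, each of which would then be an $S$-vertex adjacent to $n-1\ge 3$ vertices of $S$. That contradicts $S$-degree $2$, and the alternative, that the distance-$1$ vertices are in $Y$, leaves the distance-$2$ vertices with $S$-degree $0$. Either way there is a contradiction, so some square through $x$ avoids $S$, giving $F_5\ge1$, and (C4) is strict.

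For the second assertion, Lemma~\ref{lem:primalfamily} says the optimal point constructed in Theorem~\ref{thm:LPoptimal} has $y_0>0$ and satisfies (C1)--(C4) with (C4) tight. If it were realized by a coil, then by \eqref{eq:parametricfeasible} and optimality that coil would have length $\OPT=S_n$, so it would be a maximum coil. Its $y_0$ would be positive, so by the first assertion (C4) would hold strictly, a contradiction.

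The main obstacle is the local step, the existence of an $S$-free square through a vertex of $Y_0$. I would replace the casework above by the cleanest version: distance-$2$ vertices $x+e_i+e_j$ and $x+e_i+e_k$ in $S$ share the neighbor $x+e_i$, and three of them at $x+e_1$ give that neighbor $S$-degree at least $3$. This contradicts inducedness only if $x+e_1\in S$, which fails because $x\in Y_0$. So the contradiction should instead be obtained from $S$-degree $2$ at the distance-$2$ vertices themselves, as in the argument above.
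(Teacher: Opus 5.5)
Your proposal is correct and follows the paper's proof: you read the slack of (C4) as $2F_5$ from \eqref{eq:F5identity}, observe that if every weight-two vertex around $x\in Y_0$ lay in $S$ then each weight-three vertex would have three $S$-neighbors and so lie in $Y$, which leaves the weight-two vertices with no $S$-neighbors, and you then get non-realizability from the tight (C4) and $y_0>0$ of the constructed optimum, exactly as the paper does. For the write-up, note that $x+e_1+e_2,\ x+e_1+e_2+e_3,\ x+e_2+e_3,\ x+e_1+e_3$ is not a square but a three-leaf star centered at $x+e_1+e_2+e_3$, which is all your argument uses; also, the digressions about $x+e_1$ and about distance-one vertices possibly lying in $S$ can be deleted, since $x\in Y_0$ already places all of them in $Y$.
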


\begin{proof}
Let $S$ be a maximum coil in $Q_n$ having degree counts
$y_m=|Y_m|$ for $m=0,\ldots,n-1$, and suppose that $y_0>0$.
We first show that $F_5>0$.
Without loss of generality, assume that $Y_0$ contains $x=\varnothing$.
If $F_5=0$, then, since $\varnothing\in Y_0$, every vertex of weight
two in $Q_n$ must belong to $S$.
However, any weight-two vertex of $S$ must be adjacent to two
weight-three vertices of $S$ (since all vertices of weight one lie
in $Y$), and each such weight-three vertex is adjacent to three
weight-two vertices of $S$, a contradiction.
By \eqref{eq:F5identity}, (C4) must therefore be strict.

The point constructed in Theorem~\ref{thm:LPoptimal} has $y_0>0$
and satisfies (C4) with equality. If realized by a coil, that
coil would necessarily be maximum because its length would equal
$\OPT(\mathrm P_n(a))\ge S_n$, contradicting the preceding paragraph.
\end{proof}
We observe that the preceding obstruction is combinatorial and does not rely on possible nonintegrality or parity of the LP solution.

For fixed $a\in\mathcal A_n$, let
$(\mathrm P_n^{\mathrm{int}}(a))$ denote the restriction of
$(\mathrm P_n(a))$ in which $y_0,\ldots,y_{n-1}$ are integral
and $z$ is an even integer.
One may ask how much of the gap between $\OPT(\mathrm P_n(a))$
and $S_n$ is removed by these arithmetic restrictions, and whether
an optimal solution of $(\mathrm P_n^{\mathrm{int}}(a))$ can be
realized as the degree counts of a coil in $Q_n$.
More generally, one may compare the convex hull of the
integer-feasible vectors with the convex hull of vectors realized by maximum coils.
Understanding the difference between these polytopes may distinguish
restrictions that are essentially arithmetic from those that
require the geometry of an induced cycle in the hypercube.

\appendix
\section{Proof of Lemma~\ref{lem:Yn2refinement}}
\label{app:Yn2refinement}
\begin{proof}
For convenience we identify each binary vector $v\in Q_n$ with the set ${\{i \in [n] \mid v_i = 1\}}$;
adjacency then corresponds to adding or deleting one element.
For $x\in Y$, let
\[
r(x)=|\{\text{$\mathcal{F}_1$-squares containing $x$}\}|.
\]
\noindent The proof is an incidence-counting argument.
The local configurations in Claims~\ref{cl:claim_2}--\ref{cl:claim_4} follow the case analysis in the proof of \cite[Lemma~4]{EL}; we include the details because the aggregate accounting below differs from the inequality derived there.

Since every $\mathcal{F}_1$-square contains a unique vertex in $Y$, \eqref{eq:F1} gives
\begin{equation}
\label{eq:rx_budget}
S_n=F_1=\sum_{x\in Y}r(x)\ge
\sum_{x\in Y_{n-1}}r(x)+\sum_{x\in Y_{n-2}}r(x).
\end{equation}

We will bound the two terms on the right separately.

\begin{claim}
\label{cl:claim_1}
We have
\[
\sum_{x\in Y_{n-1}} r(x)
\ge
\frac{n-1}{2}\,|Y_{n-1}|.
\]
\end{claim}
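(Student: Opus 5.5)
The plan is to prove the bound pointwise: show that $r(x)\ge (n-1)/2$ for every single $x\in Y_{n-1}$, and then sum over $Y_{n-1}$. The method is to count edges from the coil neighbours of $x$ to the next layer.

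Fix $x\in Y_{n-1}$. By the symmetry of $Q_n$, identify $x$ with $\varnothing$ in the set notation introduced above. Then exactly one singleton $\{j\}$ lies in $Y$, and the other $n-1$ singletons $\{i\}$, $i\ne j$, lie in $S$. Since $\varnothing\notin S$, every neighbour of $\{i\}$ other than $\varnothing$ is a $2$-set. Each vertex of $S$ has exactly two $S$-neighbours, so each singleton $\{i\}\in S$ has exactly two neighbours in $S$, and both are $2$-sets of the form $\{i,k\}$. This gives exactly $2(n-1)$ edges of $S$ between singletons of $S$ and $2$-sets of $S$.

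Next we identify the $\mathcal F_1$-squares at $x$. A square through $\varnothing$ has the form $\varnothing,\{i\},\{i,k\},\{k\}$. It lies in $\mathcal F_1$ exactly when $\{i\},\{k\},\{i,k\}\in S$, since $\varnothing\in Y$ is then its unique complement vertex. So $r(x)$ equals the number of $2$-sets $\{i,k\}\in S$ for which both $\{i\}$ and $\{k\}$ lie in $S$. Now split the $2(n-1)$ edges by their $2$-set endpoint. A $2$-set $\{i,k\}\in S$ with $i,k\ne j$ receives at most two such edges, and if it receives two it contributes one $\mathcal F_1$-square. A $2$-set $\{i,j\}$ receives at most one such edge, because $\{j\}\notin S$. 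Let $t$ be the number of edges going to $2$-sets containing $j$. Then
\[
2(n-1)\le 2r(x)+t .
\]
There are only $n-1$ sets $\{i,j\}$, so $t\le n-1$, and hence $r(x)\ge (n-1)/2$. Summing over $x\in Y_{n-1}$ gives Claim~\ref{cl:claim_1}.

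The only delicate point is the exact identification of $\mathcal F_1$-squares through $x$ with $2$-sets of $S$ having both singleton neighbours in $S$. This uses two facts: $\varnothing\in Y$, and no square has all four vertices in $S$, which holds because $S$ is induced and $S_n>4$. Once that identification is in place, the rest is the edge count above.
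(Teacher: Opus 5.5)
Your proposal is correct and follows the paper's proof essentially verbatim. The paper also double-counts the $2(n-1)$ edges from the singletons $\{i\}\in S$ to pair-vertices of $S$ and obtains $2(n-1)=2r(x)+s(\{n\})$; your $t$ is exactly $s(\{j\})$. The only difference is how $s(\{n\})\le n-1$ is justified: the paper cites $Y_n=\varnothing$, whereas you note directly that only $n-1$ pair-sets contain $j$ (equivalently, $\varnothing\in Y$ is a neighbour of $\{j\}$), which is slightly more self-contained.
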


\begin{proof}
Fix $x\in Y_{n-1}$.  After translating the cube and permuting
coordinates, we may assume that
$x=\varnothing \text{ and } N(x)\cap Y=\{\{n\}\}$.
Thus $\{i\}\in S$ for every $i\in[n-1]$.

Count the two neighbors in $S$ of each of the $n-1$ vertices
$\{i\}$, $i\in[n-1]$.  Each vertex $\{i,j\}\in S$ with
$i,j\in[n-1]$ is counted twice and corresponds to an
$\mathcal F_1$-square containing $x$, while each vertex $\{i,n\}\in S$ is counted once.  
Recall from Section~\ref{sec:prelim} that $s(v)=|N(v)\cap S|$ denotes the number of neighbors of a vertex $v$ lying in $S$.
Hence
\[
2(n-1)=2r(x)+s(\{n\}).
\]
By \eqref{eq:Yn}, $Y_n=\varnothing$, and therefore
$s(\{n\})\le n-1$.  It follows that $r(x)\ge \frac{n-1}{2}$.
Summing over $x\in Y_{n-1}$ proves the claim.
\end{proof}

We now consider the contribution from $Y_{n-2}$.  
We first show that every $x\in Y_{n-2}$ satisfies $r(x)\ge 1$ (Claim~\ref{cl:claim_2}).  
We then analyze the exceptional case $r(x)=1$.  
Such a vertex is either balanced, in which case it
determines two vertices of $Y_{n-3}$ (Claim~\ref{cl:claim_3}), or
unbalanced, in which case it determines a distinct companion in
$Y_{n-2}$ lying in at least four $\mathcal F_1$-squares (Claim~\ref{cl:claim_4}).

Fix $x\in Y_{n-2}$ and, after translating the cube and permuting coordinates, we may assume
\[
x=\varnothing,\qquad
\{n-1\},\{n\}\in Y,\qquad
\{1\},\ldots,\{n-2\}\in S.
\]
Let $I=[n-2]$.  Since each singleton $\{i\}$, $i\in I$, lies on the
induced cycle and $x\notin S$, exactly two of the vertices
$\{i,j\}$, $j\ne i$, lie in $S$.  Moreover, a square through
$x=\varnothing$ is an $\mathcal{F}_1$-square precisely when it has the form
\[
\varnothing,\quad \{i\},\quad \{j\},\quad \{i,j\},
\qquad i,j\in I,
\]
with $\{i,j\}\in S$.  Hence
\begin{equation}
 r(x)=\left |\{\{i,j\}\subset I:\ \{i,j\}\in S\}\right |.
\label{eq:r-direct}
\end{equation}

\begin{claim}
\label{cl:claim_2}
Every $x\in Y_{n-2}$ satisfies $r(x)\ge1$.
\end{claim}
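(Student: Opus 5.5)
We argue by contradiction: suppose $x=\varnothing\in Y_{n-2}$ with the normalization above and $r(x)=0$. By \eqref{eq:r-direct}, no vertex $\{i,j\}$ with $i,j\in I$ lies in $S$. For each $i\in I$, the singleton $\{i\}$ lies on the coil, and its only neighbors are $\varnothing\in Y$, the sets $\{i,j\}$ with $j\in I\setminus\{i\}$ (all in $Y$ by assumption), and $\{i,n-1\},\{i,n\}$. Since $\{i\}$ has exactly two coil neighbors, both $\{i,n-1\}$ and $\{i,n\}$ must lie in $S$, for every $i\in I$.

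Next we exploit inducedness and the cycle structure. Consider the vertices $\{i,n-1\}$, $i\in I$. Each lies in $S$ and is adjacent to $\{i\}\in S$. It is also adjacent to $\{n-1\}\in Y$, to $\{i,j,n-1\}$ for $j\in I\setminus\{i\}$, and to $\{i,n-1,n\}$. We now look at the triple $\{i,j,n-1\}$ for distinct $i,j\in I$. It is adjacent to both $\{i,n-1\}$ and $\{j,n-1\}$, which are in $S$, and also to $\{i,j\}\in Y$. If $\{i,j,n-1\}\in S$ for some pair, then the three coil vertices $\{i,n-1\}$, $\{i,j,n-1\}$, $\{j,n-1\}$ are consecutive. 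In that case the path $\{i\},\{i,n-1\},\{i,j,n-1\},\{j,n-1\},\{j\}$ lies on $S$, with $\{i\}$ and $\{j\}$ each having their two coil neighbors fixed as $\{i,n-1\},\{i,n\}$ and $\{j,n-1\},\{j,n\}$ respectively.

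The simplest contradiction comes from counting coil neighbors of $\{i,n-1\}$. It already has $\{i\}$ as a coil neighbor, so it has exactly one more, chosen among the $n-3$ vertices $\{i,j,n-1\}$ and the vertex $\{i,n-1,n\}$. Each $\{i,n-1,n\}$ is adjacent to both $\{i,n-1\}$ and $\{i,n\}$. If $\{i,n-1,n\}\in S$, then $\{i\},\{i,n-1\},\{i,n-1,n\},\{i,n\}$ would be a 4-cycle entirely in $S$, which is impossible since $S_n>4$. So the second coil neighbor of $\{i,n-1\}$ is some $\{i,j,n-1\}$ with $j\in I$. Define the map $\phi(i)=j$. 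The vertex $\{i,j,n-1\}\in S$ has coil neighbors $\{i,n-1\}$ and $\{j,n-1\}$, both of which lie in $S$. Hence $\phi(j)=i$ as well, since $\{j,n-1\}$ has only one coil neighbor besides $\{j\}$. The same holds for $n$ in place of $n-1$ with a map $\psi$.

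Then for $\phi(i)=j$ the cycle closes: $\{i\},\{i,n-1\},\{i,j,n-1\},\{j,n-1\},\{j\},\{j,n\},\dots$ continues through $\{\psi(j)\}$-type vertices. Following alternately $\phi$ and $\psi$, every coil vertex encountered lies within distance three of $\varnothing$, and the walk returns, so $S$ is contained in a ball of radius 3. I would then derive the contradiction. One option is that $\{i\}$ and $\{j\}$ together with $\{i,j\}$ fail inducedness, but $\{i,j\}\in Y$, so this does not work directly. Instead I would show that this closed coil is short, of length at most about $4(n-2)$, contradicting maximality since $S_n$ is exponential by \cite{AK}. I expect this final step to be the main obstacle: making sure the closed structure really forces all of $S$ to be this local cycle, which follows because the cycle is connected and each vertex's two coil neighbors are accounted for.
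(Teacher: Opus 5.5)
Your proposal is correct and follows essentially the same route as the paper. Both arguments force the coil-neighbors of each singleton to be $\{i,n-1\}$ and $\{i,n\}$, exclude $\{i,n-1,n\}$ via the square, pair off the indices of $I$ through the triples for each of $n-1$ and $n$, and observe that the resulting vertex set is closed under coil-neighbors, so by connectivity it is all of $S$. The final step you flag as the main obstacle is in fact settled by your own closing remark. From there the paper counts $S_n=4(n-2)$; your cruder radius-$3$ ball bound $S_n\le 1+n+\binom n2+\binom n3$ also suffices. Either bound contradicts $S_n\ge(77/256)2^n$ from \cite{AK} for $n\ge13$.
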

\begin{proof}
Suppose instead that $r(x)=0$.  
By \eqref{eq:r-direct}, no $\{i,j\}$ with $i,j\in I$ lies in $S$.  
Thus the two cycle-neighbors in $S$ of each $\{i\}$, $i\in I$, must be $\{i, n-1\} \text{ and } \{i,n\}$.
Also $\{i,n-1,n\}\in Y$, since otherwise the square on
\[
\{i\},\quad \{i,n-1\},\quad \{i,n\},\quad \{i,n-1,n\}
\]
would lie entirely in $S$.

For each omitted coordinate $a\in\{n-1,n\}$, the vertex
$\{i,a\}\in S$ has $\{i\}$ as one cycle-neighbor. Its other
cycle-neighbor must be $\{i,j,a\}$ for some $j\in I\setminus\{i\}$.
Since $\{j,a\}\in S$ as well, these two pair-vertices are the two
cycle-neighbors of the triple. Thus the triples pair the indices in
$I$ for each omitted coordinate. If $|I|$ is odd, these pairings
already give a contradiction. Otherwise there are $|I|/2$ triples
for each of the two omitted coordinates.

The singletons, the $2|I|$ pair-vertices, and these $|I|$ triples form
a nonempty subgraph in which every vertex already has its two
cycle-neighbors. No edge of $S$ leaves this subgraph. Since $S$ is
connected, it is the whole coil, and
\[
 S_n=|I|+2|I|+|I|=4(n-2).
\]
However, \cite{AK} gives $S_n\ge(77/256)2^n>4(n-2)$ for $n\ge13$.
The last inequality holds at $n=13$, and its left side doubles at
successive dimensions while $4(n-2)$ grows linearly. This
contradiction proves the claim.
\end{proof}

Suppose $r(x)=1$.  Then there is a unique pair $\{i,j\}\subset I$
with $\{i,j\}\in S$.  For every $k\in I\setminus\{i,j\}$, both
$\{k,n-1\}$ and $\{k,n\}$ lie in $S$, whereas for each of
$k=i,j$ exactly one of $\{k,n-1\},\{k,n\}$ lies in $S$.  Thus exactly
two vertices among
$\{k,n-1\}, \{k,n\}, k\in I$
lie in $Y$.

Put $a=\{n-1\}, b=\{n\}, c=\{n-1,n\}$.
For every $k\in I\setminus\{i,j\}$ we have $\{k,n-1,n\}\in Y$, since the
other three vertices of the square
\[
\{k\},\quad \{k,n-1\},\quad \{k,n\},\quad \{k,n-1,n\}
\]
lie in $S$.  We next show that $c\in Y$.  If instead $c\in S$, then
$a,b\in Y$ and the preceding observation imply that the only possible
cycle-neighbors of $c$ are $\{i,n-1,n\}$ and
$\{j,n-1,n\}$; hence both must lie in $S$.  If $w_i,w_j$ are the
unique $S$-vertices among
$\{i,n-1\},\{i,n\}$ and $\{j,n-1\},\{j,n\}$,
respectively, then
\[
\{i\}-\{i,j\}-\{j\}-w_j-\{j,n-1,n\}-c
-\{i,n-1,n\}-w_i-\{i\}
\]
is an $8$-cycle in $S$.  
But this forces $S_n=8$, impossible for $n\ge13$, therefore $c\in Y$.  
Moreover, since $\{n-1\},\{n\}\in Y$ and $\{k, n-1,n\}\in Y$ for $k\in I\setminus\{i,j\}$, $c$ has at most 2 neighbors in $S$, thus $c\not\in Y_{n-2}$.

Write $\delta(w)=|N(w)\cap Y|$.
Both $a$ and $b$ have $x$ and $c$ as neighbors in $Y$.
In addition, exactly 2 vertices among $\{k,n-1\}, \{k,n\}, k\in I$ are in $Y$.
Thus
$\delta(a)+\delta(b)=6$.
Since each degree is at least $2$, either
\[
\delta(a)=\delta(b)=3,
\]
which we call the \emph{balanced} case, or, after interchanging $a,b$,
\[
\delta(a)=2,\qquad \delta(b)=4,
\]
which we call the \emph{unbalanced} case.

To summarize the preceding discussion, each $x \in Y_{n-2}$ with $r(x)=1$
determines two vertices $a$ and $b$ as
$\{a,b\}=N(x)\cap Y$.
We call $x$ a \emph{balanced vertex} if $\delta(a)=\delta(b)=3$ and an
\emph{unbalanced vertex} if $\delta(a)=2$ and $\delta(b)=4$, or if
$\delta(a)=4$ and $\delta(b)=2$.

\begin{claim}[Balanced case]
\label{cl:claim_3}
Let $x$ be a balanced vertex with neighbors $a,b\in Y$. Then neither
$a$ nor $b$ is a neighbor in $Y$ of any other balanced vertex $x'$.
\end{claim}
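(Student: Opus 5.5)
The plan is to fix the coordinates of $x$ as in the discussion preceding Claim~\ref{cl:claim_3}, determine the $Y$-neighbourhood of $a$ (and symmetrically of $b$) completely, and show that the only candidate for another balanced vertex $x'$ adjacent to $a$ is a single explicit vertex. I would then redo the normalization of that section from the point of view of this candidate and derive a contradiction.

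\textbf{Step 1 (the $Y$-neighbourhoods of $a$ and $b$).} Put $x=\varnothing$, $a=\{n-1\}$, $b=\{n\}$, $c=\{n-1,n\}$, with $\{i,j\}$ the unique pair in $S$ inside $I=[n-2]$. For $k\in I\setminus\{i,j\}$ both $\{k,n-1\}$ and $\{k,n\}$ lie in $S$. Among the four vertices $\{i,n-1\},\{i,n\},\{j,n-1\},\{j,n\}$ exactly two lie in $Y$, one for each of $i$ and $j$. Since $\delta(a)=\delta(b)=3$ and both $a$ and $b$ already have $x,c\in Y$ as neighbours, exactly one of these two $Y$-vertices contains $n-1$ and the other contains $n$. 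After relabelling $i\leftrightarrow j$, this gives
\[
\{i,n-1\},\{j,n\}\in Y,\qquad \{i,n\},\{j,n-1\}\in S .
\]
Hence the $Y$-neighbours of $a$ are exactly $x$, $c$ and $w:=\{i,n-1\}$, and those of $b$ are exactly $x$, $c$ and $\{j,n\}$.

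\textbf{Step 2 (reduction to one candidate).} A balanced vertex $x'\ne x$ having $a$ as a $Y$-neighbour lies in $Y_{n-2}$ and belongs to $\{c,w\}$. It was already shown that $c\notin Y_{n-2}$, so $x'=w$. By symmetry the analogous candidate for $b$ is $\{j,n\}$, so it suffices to treat $x'=w$.

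\textbf{Step 3 (re-normalizing at $x'$).} Assume $w$ is balanced and translate so that $w$ becomes the origin.
\begin{itemize}
\item The two $Y$-neighbours of $w$ are $a$ and some $b'$. The corresponding vertex $c'$, the fourth vertex of the square on $w,a,b'$, lies in $Y$ by the argument given for $c$. It is a $Y$-neighbour of $a$ different from $w$, so $c'\in\{x,c\}$.
\item If $c'=x$, then $b'=\{i\}\in S$, which is impossible. Hence $c'=c$, $b'=\{i,n-1,n\}$, and the omitted directions at $w$ are $i$ and $n$.
\item In this frame $\{i\}=w\,\triangle\,\{n-1\}$ is a singleton vertex. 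Its two cycle-neighbours $\{i,j\}=w\triangle\{n-1,j\}$ and $\{i,n\}=w\triangle\{n-1,n\}$ show that the unique $S$-pair at $w$ is $\{n-1,j\}$.
\item The four "pair" vertices for the special indices $n-1,j$ at $w$ are $x$, $\{i,n\}$, $\{j,n-1\}$ and $\{i,j,n-1,n\}$. Since exactly two of them lie in $Y$, we get $\{i,j,n-1,n\}\in Y$.
\end{itemize}
Balance at $w$ then forces $\delta(b')=3$, so the $Y$-neighbourhood of $b'$ is exactly $\{w,c,\{i,j,n-1,n\}\}$. Every other neighbour of $b'$, in particular every $\{i,k,n-1,n\}$ with $k\ne j$, lies in $S$.

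\textbf{Step 4 (the contradiction), which I expect to be the main obstacle.} Everything so far is consistent locally. The contradiction must come from combining the rigid data of both frames at the vertex $c$ and its neighbours. Specifically:
\begin{itemize}
\item the vertices $\{k,n-1,n\}$ for $k\ne i,j$ lie in $Y$ (frame $x$);
\item the vertices $\{i,k,n-1,n\}$ for $k\ne j$ lie in $S$ (frame $w$);
\item the vertices $\{k,n-1\},\{k,n\}$ lie in $S$.
\end{itemize}
I would first try to show that some $\{k,n-1\}\in S$, with $k\notin\{i,j\}$, acquires too many $S$-neighbours, violating the degree-$2$ condition on the induced cycle. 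The natural candidates are $\{k\}$, $\{k,n\}$ via $c$-type squares, and $\{i,k,n-1\}$. Alternatively, I would look for a closed short cycle in $S$ through $\{i\},\{i,j\},\{j\},\{j,n-1\},\{i,n\}$ and their extensions by $n-1,n$, in the style of the $8$-cycle excluded earlier. Such a cycle would force $S_n$ to be tiny, contradicting \cite{AK}. If neither works directly, I would also use $\{j,n\}\in Y$ together with $s(c)\le 2$ to pin down the cycle-neighbours of $\{i,n\}$ and $\{j,n-1\}$, which should produce the required chord or short cycle.
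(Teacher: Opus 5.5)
Your Steps 1--3 are correct, and the reduction to the single candidate $x'=w=\{i,n-1\}$ matches the paper. But the proof stops there. Step 4 only lists strategies, none of which is carried out, so no contradiction is ever obtained. Your remark that ``everything so far is consistent locally'' is also mistaken. Moreover, the first strategy you propose would not find the contradiction: for $k\notin\{i,j\}$ the forced $S$-neighbours of $\{k,n-1\}$ are only $\{k\}$ and $\{i,k,n-1\}$, so no degree violation appears there. As written, there is a genuine gap at the decisive step.

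The missing idea is to look at the vertex $\{i,j,n-1\}$, which is adjacent to $w$ in direction $j$. Since $\{i,j\}\in S$ already has its two cycle-neighbours $\{i\}$ and $\{j\}$, this vertex lies in $Y$. The paper argues exactly this way. Because $w\in Y_{n-2}$, this gives $N(w)\cap Y=\{a,\{i,j,n-1\}\}$. Hence the fourth vertex of the square on $w$ and its two $Y$-neighbours is $w\triangle\{i,j\}=\{j,n-1\}$. That vertex must lie in $Y$ by the argument that showed $c\in Y$ (valid because $r(w)=1$), contradicting $\{j,n-1\}\in S$ from your Step 1.

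In your own frame the contradiction is just as immediate. You derived $b'=\{i,n-1,n\}$, so $w$ would have the three $Y$-neighbours $a$, $b'$, $\{i,j,n-1\}$. Equivalently, $j$ is not an omitted direction at $w$, so $\{i,j,n-1\}\in S$, and then $\{i,j\}$ would have the three $S$-neighbours $\{i\}$, $\{j\}$, $\{i,j,n-1\}$. Adding this one observation right after you compute $b'$ completes the proof.
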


\begin{proof}
By the previous translation and permutation argument, assume
$x=\varnothing, a=\{n-1\},  b=\{n\}$.
We will show that if $a\in N(x')\cap Y$ for some other balanced vertex
$x'$, we reach a contradiction; the case for $b$ is symmetric.

Since $\delta(a)=3$ and $x=\varnothing$ and $c=\{n-1,n\}$ are elements
of $N(a)\cap Y$, $a$ must have a third neighbor in $Y$ of the form
$\{i,n-1\}$ or $\{j,n-1\}$, where $\{i,j\}$ is the unique pair of
indices in $I$ with $\{i,j\}\in S$. 
As $\delta(a)=3$ and $x$ and $c$ already account for two neighbors of $a$ in $Y$, precisely one of $\{i,n-1\}$ and $\{j,n-1\}$ lies in $Y$.
Thus, without loss of generality,
\[
p=\{i,n-1\}\in Y
\qquad\text{and}\qquad
q=\{j,n-1\}\in S.
\]
Because $c=\{n-1,n\}\notin Y_{n-2}$, this forces $x'=p$.

Also note that $\{i,j,n-1\}\in Y$, since the vertex $\{i,j\}\in S$ already has its two cycle-neighbors
$\{i\}$ and $\{j\}$.
Therefore
\[
N(x')\cap Y=\bigl\{a,\{i,j,n-1\}\bigr\}.
\]
Translating $Q_n$ by $x'$ so that
\[
x'\mapsto\varnothing, \qquad
a\mapsto\{i\}, \qquad
\{i,j,n-1\}\mapsto\{j\},
\]
and applying the previous observation that $c\notin S$ gives us
$\{i,j\}\notin S$ in the translate, i.e.
$\{j,n-1\}\notin S$ in $Q_n$. But this contradicts the fact that
$q\in S$.
\end{proof}

Recall that for a balanced vertex $x$, its neighbors $a,b\in Y_{n-3}$.
Thus, letting $\mathcal{B}$ denote the set of balanced vertices and
$B=|\mathcal{B}|$, we have
\begin{equation}
\label{eq:balancedcount}
2B\le |Y_{n-3}|.
\end{equation}

\begin{claim}[Unbalanced case]
\label{cl:claim_4}
Every unbalanced vertex $x$ has a distinct companion
$a\in Y_{n-2}$ satisfying
\begin{equation*}
r(a)\ge \left\lceil\frac{n-4}{2}\right\rceil\ge4.
\end{equation*}
\end{claim}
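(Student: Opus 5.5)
We work in the normalization fixed before Claim~\ref{cl:claim_2}: $x=\varnothing$, $I=[n-2]$, $\{i,j\}$ is the unique pair in $I$ with $\{i,j\}\in S$, and $c=\{n-1,n\}$. After interchanging the last two coordinates if necessary, let $a=\{n-1\}$ with $\delta(a)=2$ and $b=\{n\}$ with $\delta(b)=4$. The companion will be $a$. The plan has three steps: show $a\in Y_{n-2}$; bound $r(a)$ from below; and check that different unbalanced vertices get different companions.

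\emph{Step 1: $a\in Y_{n-2}$.} We showed above that $x,c\in Y$, and both are neighbors of $a$. Since $\delta(a)=2$, these are all of its $Y$-neighbors. Hence $s(a)=n-2$, so $a\in Y_{n-2}$. It also follows that every $\{k,n-1\}$ with $k\in I$ lies in $S$, including $k=i,j$. Consequently $\{i,n\},\{j,n\}\in Y$, which matches $\delta(b)=4$ (the four $Y$-neighbors of $b$ are $x,c,\{i,n\},\{j,n\}$).

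\emph{Step 2: lower bound on $r(a)$.} Apply \eqref{eq:r-direct} to $a$ after translating by $a$. The coordinates avoided by $a$'s $Y$-neighbors are again those of $I$, so
\[
r(a)=\bigl|\{\{k,l\}\subset I:\{k,l,n-1\}\in S\}\bigr|.
\]
Fix $k\in I\setminus\{i,j\}$. The vertex $\{k,n-1\}\in S$ has neighbors $\{k\}\in S$, $a\in Y$, $\{k,n-1,n\}\in Y$ (shown before Claim~\ref{cl:claim_3}), and the triples $\{k,l,n-1\}$ with $l\in I$. Since $S$ is induced, adjacent vertices of $S$ are cycle-neighbors, so $\{k\}$ is one cycle-neighbor of $\{k,n-1\}$. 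The other must therefore be a triple $\{k,l,n-1\}\in S$ with $l\in I$. This gives $n-4$ indices $k$, each lying in some $S$-triple $\{k,l,n-1\}$. Each such triple contains only two pairs of the form $\{\cdot,n-1\}$ with both coordinates in $I$, so it serves at most two indices $k$. Therefore
\[
r(a)\ge\left\lceil\frac{n-4}{2}\right\rceil\ge 5\ge 4\qquad(n\ge13).
\]

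\emph{Step 3: distinctness.} Since $r(a)\ge4>1$, the vertex $a$ is not itself an unbalanced (or balanced) vertex, so in particular $a\neq x$. Now suppose an unbalanced $x'$ has the same companion $a$. Then $x'$ is a $Y$-neighbor of $a$ lying in $Y_{n-2}$. The only $Y$-neighbors of $a$ are $x$ and $c$, and $c\notin Y_{n-2}$, so $x'=x$. Hence the map $x\mapsto a$ is injective, and the companions are distinct.

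The main obstacle is Step~2. The delicate points are justifying that $\{k\}$ is forced to be a cycle-neighbor of $\{k,n-1\}$, and getting the factor $2$ in the triple-counting right. The indices $k=i,j$ are simply discarded, which is why only $n-4$ indices remain.
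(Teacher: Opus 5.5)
Your proposal is correct and follows the paper's proof essentially step for step. You use the same companion $a=\{n-1\}$ with $N(a)\cap Y=\{x,c\}$, the same observation that for each $k\in I\setminus\{i,j\}$ the second cycle-neighbor of $\{k,n-1\}$ is a triple $\{k,\ell,n-1\}\in S$ giving an $\mathcal F_1$-square through $a$ counted at most twice, and the same injectivity argument via $c\notin Y_{n-2}$; your use of \eqref{eq:r-direct} after translating by $a$ just restates the paper's explicit square $a,\{k,n-1\},\{k,\ell,n-1\},\{\ell,n-1\}$.
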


\begin{proof}
Let $x$ be an unbalanced vertex. After translating $Q_n$ and permuting
$[n]$, we may assume $x=\varnothing,  a=\{n-1\}, b=\{n\}$,
with $\{a,b\}=N(x)\cap Y$.
We also assume without loss of generality that
$\delta(a)=2$ and $\delta(b)=4$.
Thus $a\in Y_{n-2}$ and, recalling that $c=\{n-1,n\}\in Y$, $a$'s
neighbors in $Y$ are $x$ and $c$. Since we showed earlier that
$c\notin Y_{n-2}$, we conclude that two distinct unbalanced vertices
cannot share a common companion $a$ with $\delta(a)=2$.

Because
\[
N(a)\cap Y=\{x,c\},
\]
we have
\[
\{k,n-1\}\in S
\qquad \forall\, k\in I.
\]
For each $k\in I\setminus\{i,j\}$, the vertex
\[
\{k,n-1,n\}\in Y.
\]
Thus in $S$ the neighbors of $\{k,n-1\}$ are $\{k\}$ and
$\{k,\ell,n-1\}$, where $\ell\in I\setminus\{k\}$. Since
$\{\ell,n-1\}$ is also in $S$, the vertices
\[
a,\qquad
\{k,n-1\},\qquad
\{k,\ell,n-1\},\qquad
\{\ell,n-1\}
\]
form an $\mathcal{F}_1$ square through $a$. Such a
$\mathcal{F}_1$ square exists for each of the $n-4$ elements
$k\in I\setminus\{i,j\}$, and each square arises from at most two choices of 
$k\in I\setminus\{i,j\}$; hence
\[
r(a)\ge
\left\lceil\frac{n-4}{2}\right\rceil
\ge 4,
\]
completing the proof.
\end{proof}

We now complete the proof. Let $\mathcal{U}$ be the set of unbalanced
vertices, let $U=|\mathcal{U}|$, and let $\mathcal{C}$ denote the set of companions from Claim~\ref{cl:claim_4}. 
By Claim~\ref{cl:claim_4}, $|\mathcal{C}|=U$, and each $a\in\mathcal{C}$ has $r(a)\ge 4$. 
Also, $\mathcal{C}$ is disjoint from $\mathcal{B}\cup\mathcal{U}$, as $r(a)\ge 4$. 
Following from Claim~\ref{cl:claim_2}, every
$x\in Y_{n-2}\setminus(\mathcal{B}\cup\mathcal{U})$
has $r(x)\ge 2$. 
Therefore, taking
\[
X=|Y_{n-2}|,
\qquad
R=\sum_{x\in Y_{n-2}} r(x),
\]
and partitioning $Y_{n-2}$ into
$\mathcal{B},
\mathcal{U},
\mathcal{C},
Y_{n-2}\setminus
(\mathcal{B}\cup\mathcal{U}\cup\mathcal{C}),
$
we have
\begin{align*}
R
&\ge B+U+4U+2(X-B-U-U)\\
&=2X-B+U\\
&\ge 2X-B.
\end{align*}

Combining this with \eqref{eq:balancedcount} gives
\begin{equation}
\label{eq:yn2_bound}
\sum_{x\in Y_{n-2}}r(x)
\ge
2|Y_{n-2}|-\frac12|Y_{n-3}|.
\end{equation}

Combining Claim~\ref{cl:claim_1} and \eqref{eq:yn2_bound} with the earlier bound \eqref{eq:rx_budget} gives
\[
\begin{aligned}
S_n
&\ge
\sum_{x\in Y_{n-1}} r(x)
+
\sum_{x\in Y_{n-2}} r(x)\\
&\ge
\frac{n-1}{2}|Y_{n-1}|
+
2|Y_{n-2}|
-
\frac12|Y_{n-3}|.
\end{aligned}
\]
Multiplying by $2$ proves the lemma.
\end{proof}

\section{The comparison in Corollary~\ref{cor:LPbound}}
\label{app:certificatecrossover}
Put $G_n=8T+E$. By \eqref{eq:Ln}, $L_n=8b/G_n$.
At $\xi=1/8$, the slack in (C6$_a$) is
\[
s_{6a}(1/8)=\frac{N}{3G_n}\Psi_n(a),
\]
where
\begin{align*}
\Psi_n(a)
={}&9\bigl(31n^2-205n+272-(10n-28)M\bigr)a^2\\
&-8n(n-1)(5n-14)a\\
&+(n-1)\bigl(1577n^2-6926n+7008-(120n-336)M\bigr)\\
&+\frac{10G_n}{N}(a-8).
\end{align*}
Consequently, $w_n(a)<L_n$ if and only if $\Psi_n(a)<0$.

For $a>8$, the last term of $\Psi_n(a)$ is positive, while its
remaining terms decrease with $M$.  Thus $M\le3n/4$ gives
\[
\begin{aligned}
\Psi_n(a)>{}&
\frac92(47n^2-368n+544)a^2
-8n(n-1)(5n-14)a\\
&+(n-1)(1487n^2-6674n+7008).
\end{aligned}
\]
The coefficient of $a^2$ on the right is positive for $n\ge13$.
The discriminant of this quadratic is $2(n-1)P(n)$, where
\begin{align*}
P(n)={}&800n^5-634281n^4+7758798n^3\\
&-32355296n^2+55886400n-34311168.
\end{align*}
For $13\le n\le780$,
\[
P(n)<n^3\left(
800n^2-634281n+7758798+\frac{55886400}{n^2}
\right)<0.
\]
Indeed, the expression in parentheses is convex on $[13,780]$.
At the endpoints its values are $-3543295/169$ and $-43989034/169$,
respectively. Both are negative. The quadratic is therefore positive for every
real $a$, so no $a\in\mathcal A_n$ improves $L_n$ in this range.

For $n\ge781$, take $a=2n/21$.  This value exceeds $8$ and lies in $\mathcal A_n$, since
\[
3a^2n-R=n^2\left(\frac{4n}{147}-1\right)+14n-32>0.
\]
Using $M\ge3(n-1)/4$ gives
\[
-147N\Psi_n(2n/21)
\ge NP_1(n)-35(n-84)(160n^3-539n^2+73n+502),
\]
where
\[
P_1(n)=278n^4-218599n^3+1184993n^2-1960980n+993132.
\]
Writing $n=781+t$ gives
\[
\begin{aligned}
P_1(n)={}&278t^4+649873t^3+506422484t^2\\
&+131572453761t+15750882804>0.
\end{aligned}
\]
Here $P_1(n)\ge15750882804$, while
\[
35(n-84)(160n^3-539n^2+73n+502)<5600n^4.
\]
Since $N=2^{n-1}>n^4$ for $n\ge781$, the right-hand side above is
positive. The inequality $2^{n-1}>n^4$ holds at $n=781$ and
persists because $2>(1+1/n)^4$ in this range.  Thus $w_n(2n/21)<L_n$ for every $n\ge781$.

\end{document}